\documentclass[11pt]{article}
\usepackage{amsfonts}
\usepackage{latexsym, amssymb, amsmath, amscd, amsfonts, epsfig, graphicx, colordvi}
\usepackage{ifpdf}
\usepackage{graphicx}
\usepackage{xcolor}
\usepackage{tikz}
\usepackage{caption} 
\usetikzlibrary{arrows.meta}

\usepackage[dvipsnames]{pstricks}
\newtheorem{thm}{Theorem}[section]

\newtheorem{pro}[thm]{Proposition}
\newtheorem{rem}[thm]{\it Remarks}
\newtheorem{defi}[thm]{Definition}
\newtheorem{lem}[thm]{Lemma}

\newtheorem{exam}[thm]{\bf Example}
\def\pf{\noindent{\it Proof.} }
\def\qed{\nopagebreak\hfill{\rule{4pt}{7pt}}
\medbreak}

\def\qed{\nopagebreak\hfill{\rule{4pt}{7pt}}
\medbreak}

\makeatletter
\def\ExtendSymbol#1#2#3#4#5{\ext@arrow 0099{\arrowfill@#1#2#3}{#4}{#5}}
\makeatother

\title{An Andrews--Gordon Type Identity Related to Andrews' Parity Consideration}
\author{
Robert X.J. Hao\raisebox{5pt}{\scriptsize 1},
Xiaorui Niu\raisebox{5pt}{\scriptsize 2},
Doris D.M. Sang\raisebox{5pt}{\scriptsize 3}
and
Diane Y.H. Shi\raisebox{5pt}{\scriptsize 4}
}

\date{
\vspace{15pt}
\raisebox{5pt}{\scriptsize 1\,}haoxj@njit.edu.cn\\
College of Science and Mathematics\\
Nanjing Institute of Technology, Nanjing, Jiangsu 211167, P.~R.~China\\[10pt]
\raisebox{5pt}{\scriptsize 2\,}xiaorui\_niu@tju.edu.cn\\
School of Mathematics\\
Tianjin University, Tianjin 300072, P.~R.~China\\[10pt]
\raisebox{5pt}{\scriptsize 3\,}sangdm@dufe.edu.cn\\
School of Mathematics\\
Dongbei University of Finance and Economics, Dalian, Liaoning 116025, P.~R.~China\\[10pt]
\raisebox{5pt}{\scriptsize 4\,}shiyahui@tju.edu.cn\\
School of Mathematics\\
Tianjin University, Tianjin 300072, P.~R.~China\\[10pt]
}

\begin{document}
\maketitle
\noindent {\bf Abstract.}
Andrews investigated parity conditions in the Rogers–Ramanujan–Gordon theorem. 
Under the conditions that even parts or odd parts appear an even number of times, 
Andrews discovered two Rogers–Ramanujan–Gordon type partition theorems  
and derived corresponding generating functions. In the Rogers-Ramanujan-Gordon 
theorem, there are two parameters $k$ and $a$, where $k-1$ is the maximum 
number of consecutive parts $l$ and $l+1$, and $a-1$ is the maximum number of 
parts equal to $1$. Andrews' first theorem deals with the case 
$k\equiv a \;(\rm{mod}\;2)$, while the second theorem concerns the case 
where $k$ is even and $a$ is odd. These two partition identities have different 
infinite product forms on the right-hand side. In this paper, we consider the case 
$k\not\equiv a \;(\rm{mod}\;2)$ and obtain an Andrews–Gordon 
type identity whose right-hand side coincides with that of Andrews' identity for the
case $k\equiv a \;(\rm{mod}\;2)$. By fixing the number of peaks of the corresponding 
lattice paths, we also derive a recurrence system whose solution agrees with the 
product-side generating function. We were unable to find a suitable combinatorial 
interpretation of the infinite sum form of this expression in terms of partitions, 
but with the help of lattice paths, we provide an appropriate combinatorial interpretation.
Finally, we prove this identity analytically by applying Bailey's lemma.

\noindent {\bf Keywords:} Andrews' identity, parity restriction, Bailey pair, Bailey's lemma

\noindent {\bf AMS Subject Classification:} 05A17, 11P84

\section{ Introduction}

In 2010, Andrews \cite{and10} considered parity conditions in the Rogers--Ramanujan--Gordon theorem and obtained 
two new Rogers--Ramanujan--Gordon type theorems, along with their corresponding generating function forms of the Andrews--Gordon type.

Let us recall the significant Rogers--Ramanujan-Gordon theorem due to Gordon \cite{gor61} which is a combinatorial generalization of the Rogers--Ramanujan identities.

\begin{thm}\label{GRR}
For $1\leq a \leq k$, let $B_{k,a}(n)$ denote the number of partitions
of $n$ of the form $(1^{f_1}, 2^{f_2}, 3^{f_3}, \ldots)$, where $f_i$ is the
number of occurrences of $i$ in the partition, such that
$f_1\leq a-1$ and
 $f_i+f_{i+1}\leq k-1$.
 Let $A_{k,a}(n)$ denote the number of partitions of $n$ whose  parts  are not congruent to
$0,\pm a$ modulo $2k+1$. Then for all $n\geq 0$,

\[A_{k,a}(n)=B_{k,a}(n).\]
\end{thm}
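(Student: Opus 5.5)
The plan is the classical route of Andrews via $q$-difference equations in two variables; the alternative is to go through the Andrews--Gordon multisum and Bailey's lemma. For $1\le i\le k$ define
\[
J_{k,i}(x;q)=\sum_{m,n\ge 0} b_{k,i}(m,n)\,x^m q^n,
\]
where $b_{k,i}(m,n)$ counts partitions of $n$ into $m$ parts satisfying the frequency conditions of Theorem~\ref{GRR} with $a=i$, that is, $f_1\le i-1$ and $f_j+f_{j+1}\le k-1$. Removing the ones and then subtracting $1$ from every remaining part gives a recurrence in the bound on $f_1$. A partition counted by $J_{k,i}$ but not by $J_{k,i-1}$ has exactly $f_1=i-1$ ones, which forces $f_2\le k-i$. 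Deleting these ones and shifting all parts down by one therefore yields
\[
J_{k,i}(x;q)-J_{k,i-1}(x;q)=(xq)^{i-1}J_{k,k-i+1}(xq;q),
\]
with $J_{k,0}=0$ and $J_{k,i}(0;q)=1=J_{k,i}(x;0)$. These functional equations together with the initial conditions determine the $J_{k,i}$ uniquely as formal power series in $x$ and $q$. The reason is that comparing coefficients of $x^m$ gives a recursion in $m$: the right-hand side involves only coefficients of $x^{m'}$ with $m'<m$, or $m'=m$ with a strictly larger $q$-power shift.

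The next step is to exhibit an explicit solution. I would take Andrews' series
\[
H_{k,i}(x;q)=\sum_{n\ge0}\frac{(-1)^n x^{kn} q^{(2k+1)\binom{n+1}{2}-in}(1-x^iq^{2ni})(xq^{n+1};q)_\infty}{(q;q)_n\,(x;q)_\infty}\cdot (x)_n
\]
in the normalization that satisfies $H_{k,i}-H_{k,i-1}=x^{i-1}H_{k,k-i+1}(xq)$. Then $J_{k,i}(x;q)=H_{k,i}(xq;q)$, up to the standard adjustment. The verification is a termwise manipulation: both differences reduce to the identity $(1-x^iq^{2ni})-(1-x^{i-1}q^{2n(i-1)})$ after the summation index is shifted. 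One also checks the initial values at $x=0$ and $i=0$. By uniqueness the two families agree.

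The last step sets $x=1$ in $H_{k,i}(q;q)$. The $(x;q)_\infty$ factors collapse to $1/(q;q)_\infty$, and the sum becomes a bilateral series
\[
\frac{1}{(q;q)_\infty}\sum_{n=-\infty}^{\infty}(-1)^n q^{(2k+1)n(n+1)/2-in}.
\]
Jacobi's triple product with base $q^{2k+1}$ turns this into
\[
\frac{(q^i,q^{2k+1-i},q^{2k+1};q^{2k+1})_\infty}{(q;q)_\infty}=\sum_n A_{k,i}(n)q^n .
\]
Since the left side equals $\sum_n B_{k,i}(n)q^n$, we get $A_{k,i}(n)=B_{k,i}(n)$.

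I expect the main obstacle to be the second step: finding the correct normalization of $H_{k,i}$ (the powers of $x$ and $q$ and the $x\mapsto xq$ shift) so that the functional equation matches the combinatorial one exactly. The check itself is routine but index-sensitive, and I would carry it out by separating the $x^iq^{2ni}$ part of each summand and reindexing $n\to n-1$ in it. The combinatorial recurrence in the first step also needs care at the boundary $i=k$, where the constraint on $f_2$ becomes vacuous.
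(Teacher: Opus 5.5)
The paper does not prove this statement. It quotes it from Gordon \cite{gor61}, together with Andrews' analytic form (Theorem~\ref{AG}), so your proposal has to stand on its own. Its architecture is Andrews' classical functional-equation proof, and steps one and three are sound:
\begin{itemize}
\item The recurrence $J_{k,i}(x)-J_{k,i-1}(x)=(xq)^{i-1}J_{k,k-i+1}(xq)$ is correct.
\item Uniqueness holds, because for $m\ge1$ the only same-$m$ term on the right carries a factor $q^m$.
\item The final triple-product evaluation is correct.
\end{itemize}
There is one small slip. The bound $f_2\le k-i$ is vacuous at $i=1$, not at $i=k$: at $i=k$ it forces $f_2=0$, and what is vacuous there is $f_1\le k-1$. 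The recurrence holds uniformly in $i$, so no special case is needed.

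The gap is step two: the series you display is not a solution, and what you claim about it is false. Since $(x;q)_\infty=(x;q)_n(1-xq^n)(xq^{n+1};q)_\infty$, your summand is
\[
\frac{(-1)^n x^{kn}q^{(2k+1)\binom{n+1}{2}-in}(1-x^iq^{2ni})}{(q;q)_n(1-xq^n)} .
\]
This fails on both counts you need.
\begin{itemize}
\item \emph{Specialization.} At $x=q$ its denominator is $(q;q)_{n+1}$, not $(q;q)_\infty$. Also $x^{kn}$ adds $q^{kn}$ to the exponent, so no theta series appears.
\item \emph{Functional equation.} In $H_{k,i}-H_{k,i-1}$ the bracket is $(1-q^n)+x^{i-1}q^{(2i-1)n}(1-xq^n)$. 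Your description drops the factor $q^n$ coming from $q^{-(i-1)n}$. The second part cancels your single factor $1-xq^n$ and leaves no $x$ in the denominator. The matching half of $x^{i-1}H_{k,k-i+1}(xq)$, however, has denominator $(q;q)_n(1-xq^{n+1})$, so the two sides disagree.
\end{itemize}

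There are two separate errors in your formula. The stray factor $(xq^{n+1};q)_\infty$ is a leftover of Andrews' $(axq^{n+1};q)_\infty$, which equals $1$ at $a=0$. The exponent must be $kn^2+\binom{n+1}{2}-in$, not $(2k+1)\binom{n+1}{2}-in$; the extra $q^{kn}$ is supplied by $x^{kn}$ only at $x=q$. The correct solution is
\[
H_{k,i}(x;q)=\sum_{n\ge0}\frac{(-1)^n x^{kn}q^{kn^2+\binom{n+1}{2}-in}(1-x^iq^{2ni})}{(q;q)_n\,(xq^n;q)_\infty},
\]
with $J_{k,i}(x;q)=H_{k,i}(xq;q)$ exactly, and no further adjustment.

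With this $H_{k,i}$ the check goes through:
\begin{itemize}
\item The $(1-q^n)$-part, reindexed by $n\mapsto n+1$, gives the half of $x^{i-1}H_{k,k-i+1}(xq)$ carrying $-(xq)^{k-i+1}q^{2n(k-i+1)}$.
\item The $(1-xq^n)$-part cancels the first factor of $(xq^n;q)_\infty$ and gives the other half.
\item At $x=q$, $1/\bigl((q;q)_n(q^{n+1};q)_\infty\bigr)=1/(q;q)_\infty$, and the exponent becomes $kn+kn^2+\binom{n+1}{2}-in=(2k+1)\binom{n+1}{2}-in$.
\item The terms with $q^{(2n+1)i}$ become the negative indices under $n\mapsto -n-1$, which yields your bilateral series.
\end{itemize}
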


Below is a brief review of some basic definitions related to partitions.  
An unrestricted partition $\lambda$ of a positive integer $n$ is a 
non-increasing sequence of positive integers 
$\lambda_1\geq \cdots\geq \lambda_s>0$ 
such that $n=|\lambda|=\lambda_1+\cdots+\lambda_s$. The unique partition of $0$ is the empty partition $\emptyset$.
Given a partition $\lambda$, let $f_l(\lambda)$  denote the number of occurrences of
$l$ in $\lambda$. Then a partition $\lambda$ can also be represented as $\lambda=(1^{f_1},2^{f_2},3^{f_3},\ldots)$. 

 Andrews \cite{and74} discovered the following Andrews--Gordon identity, which serves as the generating function version of the Rogers--Ramanujan--Gordon theorem.

\begin{thm}{\rm (\cite[Theorem 1]{and74})}\label{AG}
For $1\leq a\leq k$, we have
\begin{equation}\label{eqRRG}
\sum_{n_1, n_2, \cdots, n_{k-1}\geq 0}\frac{q^{N_1^2+N_2^2+\cdots+N_{k-1}^2+N_a+\cdots+N_{k-1}}}
{(q)_{n_1}(q)_{n_2}\ldots(q)_{n_{k-1}}}=
\frac{(q^a,q^{2k+1-a},q^{2k+1};q^{2k+1})_{\infty}}
{(q)_{\infty}},
\end{equation}
where \(N_j=n_j+n_{j+1}+\cdots+n_{k-1}\) for \(1\le j\le k-1\), \(N_k=0\), and equivalently \(n_i=N_i-N_{i+1}\) for \(1\le i\le k-1\). We shall use this notation throughout the paper.
\end{thm}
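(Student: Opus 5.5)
The plan is to prove the Andrews--Gordon identity (\ref{eqRRG}) analytically through the Bailey lattice, or more simply through a Bailey chain started from the unit Bailey pair. Recall that a pair $(\alpha_n,\beta_n)$ is a Bailey pair relative to $a$ if $\beta_n=\sum_{r=0}^n \alpha_r/((q)_{n-r}(aq)_{n+r})$. Bailey's lemma, taken in the limit $\rho_1,\rho_2\to\infty$, maps such a pair to a new pair $(\alpha_n',\beta_n')$ with
\[
\alpha_n'=a^nq^{n^2}\alpha_n,\qquad \beta_n'=\sum_{j\ge 0}\frac{a^jq^{j^2}}{(q)_{n-j}}\beta_j .
\]
Iterating this produces the multisum on the left-hand side, with the $N_j$ playing the role of the successive summation indices.

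The first step treats the case $a=k$. Start from the unit pair relative to $a=1$:
\[
\beta_n=\delta_{n0},\qquad \alpha_n=(-1)^nq^{\binom n2}(1+q^n)\quad (n\ge 1),\qquad \alpha_0=1 .
\]
Apply the limiting lemma $k-1$ times, and then take the final identity in the limit $n\to\infty$, in the form $\sum_j q^{j^2}\beta_j=\frac{1}{(q)_\infty}\sum_r q^{r^2}\alpha_r$. This gives
\[
\sum \frac{q^{N_1^2+\cdots+N_{k-1}^2}}{(q)_{n_1}\cdots(q)_{n_{k-1}}}=\frac{1}{(q)_\infty}\sum_{r\in\mathbb Z}(-1)^rq^{(2k+1)\binom{r+1}{2}-kr}.
\]
Jacobi's triple product then turns the right-hand side into $(q^k,q^{k+1},q^{2k+1};q^{2k+1})_\infty/(q)_\infty$, which is the case $a=k$.

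The second step handles general $a$. Here the linear terms $N_a+\cdots+N_{k-1}$ must appear, and the standard route is to switch the parameter from $a=1$ to $a=q$ partway along the chain. Concretely, begin with the unit pair relative to $q$, apply $k-a$ steps relative to $q$, which contributes the factors $q^{N_j^2+N_j}$. Then use the Bailey lattice step of Agarwal--Andrews--Bressoud to pass to parameter $1$, and apply the remaining $a-1$ steps relative to $1$. After this, $\alpha_r$ takes the form $(-1)^rq^{(2k+1)\binom r2+\cdots}(1-q^{(2r+1)a})$-type differences, and the sum over $r$ collapses by the triple product to $(q^a,q^{2k+1-a},q^{2k+1};q^{2k+1})_\infty$.

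The main obstacle is this lattice step, that is, verifying that the change of $a$ inserts exactly the linear exponent while producing the correct $\alpha_r$. If the lattice machinery is unwieldy, I would instead use Andrews' original approach. This defines $J_{k,i}(x)$ by the multisum with $q$ replaced by $xq$ in the appropriate places, and verifies the functional equations $J_{k,i}(x)-J_{k,i-1}(x)=(xq)^{i-1}J_{k,k-i+1}(xq)$ for both the sum side and the product side (the latter via the Selberg-type series $\sum(-1)^nx^{kn}q^{\cdots}(1-x^iq^{(2n+1)i})/(q)_n(xq^{n+1})_\infty$). Uniqueness of power-series solutions to this system then gives (\ref{eqRRG}) at $x=1$.
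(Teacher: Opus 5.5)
The paper does not prove this statement; it is quoted from Andrews \cite{and74} as background, so your proposal has to stand on its own.

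\textbf{The case $a=k$ is complete and correct.} Start from the unit pair at $a=1$, apply Theorem~\ref{S1} $k-1$ times, and pass to the $n\to\infty$ form. This leaves $\frac{1}{(q)_\infty}\sum_{r\in\mathbb Z}(-1)^rq^{((2k+1)r^2+r)/2}$. The triple product $\sum_r(-1)^rq^{(2k+1)\binom r2}x^r=(x,q^{2k+1}/x,q^{2k+1};q^{2k+1})_\infty$ with $x=q^{k+1}$ then gives the product.

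\textbf{For $1\le a<k$ you have a plan, not a proof.} You name two known routes but leave unverified the one step that carries the content: the Bailey lattice step, or, in the fallback, checking $J_{k,i}(x)-J_{k,i-1}(x)=(xq)^{i-1}J_{k,k-i+1}(xq)$ on the multisum side. Both routes do work. If you keep the lattice, the bookkeeping must be exact:
\begin{itemize}
\item you need $k-a+1$ applications at parameter $q$, because the first one, acting on the unit pair, creates no summation index;
\item the lattice step must then be the first of the $a-1$ index-producing steps at parameter $1$ (for $a=2$ it is also the $n\to\infty$ step);
\item any other reading of your count gives either $k$ indices or only $k-a-1$ linear terms.
\end{itemize}

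\textbf{A cheaper way to close the gap, without changing the Bailey parameter.} Alternate Theorem~\ref{S1} with Proposition~\ref{pro4.1} of \cite{bre00}, which is exactly how Section~5 handles the linear terms in Theorem~\ref{main1}.
\begin{itemize}
\item One application of Theorem~\ref{S1} to the unit pair at $a=1$ gives $\alpha_n=(-1)^nq^{3n^2/2}(q^{n/2}+q^{-n/2})$. This is the shape Proposition~\ref{pro4.1} requires, with $A=3/2$.
\item That proposition replaces $q^{\pm n/2}$ by $q^{\pm 3n/2}$ and multiplies $\beta_n$ by $q^n$.
\item The next application of Theorem~\ref{S1} turns this $q^n$ into a linear term $q^{N_j}$. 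It also restores the shape $(-1)^nq^{An^2}(q^{(A-1)n}+q^{-(A-1)n})$, now with $A=5/2$.
\end{itemize}
Do $k-a$ such rounds, then $a-1$ further applications of Theorem~\ref{S1}, then the limit $\sum_jq^{j^2}\beta_j=(q)_\infty^{-1}\sum_rq^{r^2}\alpha_r$. The linear terms land exactly on $N_a,\dots,N_{k-1}$, and you get
\[
\sum_{N_1\ge\cdots\ge N_{k-1}\ge0}\frac{q^{N_1^2+\cdots+N_{k-1}^2+N_a+\cdots+N_{k-1}}}{(q)_{N_1-N_2}\cdots(q)_{N_{k-2}-N_{k-1}}(q)_{N_{k-1}}}
=\frac{1}{(q)_\infty}\sum_{r\in\mathbb Z}(-1)^rq^{((2k+1)r^2+(2k+1-2a)r)/2}.
\]
This is \eqref{eqRRG} by the triple product with $x=q^{2k+1-a}$.

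The final $\alpha_r$ here is $(-1)^rq^{(2k+1)r^2/2}\bigl(q^{(2k+1-2a)r/2}+q^{-(2k+1-2a)r/2}\bigr)$. That is a sum, not the $(1-q^{(2r+1)a})$-type difference you anticipated; the difference form appears only after refolding the bilateral series. It is the same Bailey pair your lattice step would produce. The only ingredient is Proposition~\ref{pro4.1} itself, whose proof is a one-line cancellation under the involution $r\mapsto -r-1$.
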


Here and throughout  this paper, we employ the standard notation on $q$-series:
\[
(a)_\infty=(a;q)_{\infty}=\prod_{j=0}^{\infty}(1-aq^j), \ \
(a)_n=(a;q)_n=\frac{(a)_{\infty}}{(aq^n)_{\infty}}
\]
and
\[
(a_1,\ldots,a_k;q)_{\infty}
=(a_1;q)_{\infty}\cdots(a_k;q)_{\infty}.
\]

The product side of \eqref{eqRRG} is immediately seen to be the
generating function for the congruence classes counted by $A_{k,a}(n)$.
The interpretation of the sum side as the generating function for
the difference conditions counted by $B_{k,a}(n)$ is more subtle.
It was first obtained analytically by Andrews via recurrences, and a
bijective/lattice-path proof was later given by Warnaar
\cite{war97}. Warnaar's construction may be viewed in terms of particle
motion on frequency diagrams, starting from a minimal configuration
satisfying the Gordon frequency conditions.

Kur\c{s}ung\"{o}z subsequently used Gordon marking to obtain sum-side
formulas in several parity-restricted cases \cite{kur09}; in the cases
relevant here, Gordon marking is closely related to Warnaar's particle
motion approach.

Later, Andrews \cite{and10} studied Rogers--Ramanujan--Gordon type partitions with parity restrictions and obtained infinite product generating functions in certain cases, 
together with the corresponding Andrews--Gordon type identities. 
The remaining cases were later investigated by Kur\c{s}ung\"{o}z \cite{kur09} and Kim and Yee \cite{kim13}. 
The results are summarized as follows.

\begin{thm}[Andrews \cite{and10}, Kim and Yee \cite{kim13}, Kur\c{s}ung\"{o}z \cite{kur09}]\label{andwe}
Suppose that $k\ge a\ge 1$ are integers. Let $W_{k,a}(n)$ denote the number of partitions enumerated by $B_{k,a}(n)$ with the additional 
restriction that even parts appear an even number of times.

If $k\equiv a \pmod{2}$, then
\begin{align}\nonumber
\sum_{n\geq 0}W_{k,a}(n)q^n
&=\frac{(-q;q^2)_\infty(q^a,q^{2k+2-a},q^{2k+2};q^{2k+2})_\infty}{(q^2;q^2)_\infty}\\
&=\label{andw}\sum_{N_1\geq N_2\geq\cdots\geq N_{k-1}\geq0}
\frac{q^{N_1^2+N_2^2+\cdots+N_{k-1}^2+2N_a+2N_{a+2}+\cdots+2N_{k-2}}}
{(q^2;q^2)_{N_1-N_2}\cdots(q^2;q^2)_{N_{k-2}-N_{k-1}}(q^2;q^2)_{N_{k-1}}}.
\end{align}

If $k\not\equiv a \pmod{2}$, then
\begin{align}
\sum_{n\geq0}W_{k,a}(n)q^n
&=\sum_{N_1\geq N_2\geq\cdots\geq N_{k-1}\geq0}
\frac{q^{N_1^2+N_2^2+\cdots+N_{k-1}^2+2N_a+2N_{a+2}+\cdots+2N_{k-1}}}
{(q^2;q^2)_{N_1-N_2}\cdots(q^2;q^2)_{N_{k-2}-N_{k-1}}(q^2;q^2)_{N_{k-1}}}\label{Wkad-sum}\\
&=\frac{(-q^3;q^2)_\infty(q^{a+1},q^{2k+1-a},q^{2k+2};q^{2k+2})_\infty}{(q^2;q^2)_\infty}\label{Wkad-prod1}\\
&\qquad+\frac{q(-q^3;q^2)_\infty(q^{a-1},q^{2k+3-a},q^{2k+2};q^{2k+2})_\infty}{(q^2;q^2)_\infty}.\label{Wkad-prod2}
\end{align}
\end{thm}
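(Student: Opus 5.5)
This theorem collects known results. Our plan is to follow the standard two-stage route: first identify the sum sides combinatorially as generating functions for $W_{k,a}(n)$, then evaluate them as products by Bailey-pair machinery.

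\emph{Sum side.} We would follow Kur\c{s}ung\"{o}z's Gordon-marking argument, which is closely related to Warnaar's particle motion.

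1. Take a partition counted by $W_{k,a}(n)$ and mark its parts in the usual Gordon way. Let $N_j$ be the number of $j$-th marked parts, so that $N_1\ge N_2\ge\cdots\ge N_{k-1}$.
2. Because even parts occur with even multiplicity, the marked parts naturally occur in pairs. The minimal configuration for fixed $(N_1,\dots,N_{k-1})$ then has weight $N_1^2+\cdots+N_{k-1}^2$. This weight is corrected by the linear term $2N_a+2N_{a+2}+\cdots$, which encodes the constraint $f_1\le a-1$ together with parity.
3. The admissible forward moves of the $n_j=N_j-N_{j+1}$ particles of each type must preserve parity. They therefore move in steps of $2$, which produces the factors $1/(q^2;q^2)_{N_j-N_{j+1}}$.
4. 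The parity of $k-a$ decides whether the last linear term is $2N_{k-2}$ or $2N_{k-1}$. This distinguishes \eqref{andw} from \eqref{Wkad-sum}.

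The key check is that the moves form a bijection onto all admissible partitions, for each fixed tuple $(N_j)$.

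\emph{Product side, case $k\equiv a$.} We would use a Bailey chain in base $q^2$, iterated $k-1$ times.

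1. Start from the unit Bailey pair relative to $1$ or $q^2$, depending on $a$.
2. Insert the parameters $\rho_1=-q$, $\rho_2\to\infty$ at the step where the factor $(-q;q^2)$ is needed.
3. Use $\rho_1,\rho_2\to\infty$ at the remaining steps. This produces the exponents $N_j^2$, and the shift by $q^{2N_j}$ occurs exactly at every second step from $a$ onward.
4. The $\beta$-side then becomes the multisum. On the $\alpha$-side, Jacobi's triple product in base $q^{2k+2}$ gives $(q^a,q^{2k+2-a},q^{2k+2};q^{2k+2})_\infty$ times $(-q;q^2)_\infty/(q^2;q^2)_\infty$.

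Alternatively, the case $k\equiv a$ can be obtained from Andrews' recurrence method: define $J_{k,i}(x;q)$ with functional equations in $x\to xq^2$, verify that both sides satisfy them, and use uniqueness.

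\emph{Product side, case $k\not\equiv a$.} The $\alpha$-side no longer collapses to a single theta series.

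1. The extra linear term $2N_{k-1}$ breaks the symmetry $a\leftrightarrow 2k+2-a$.
2. We would split the bilateral sum according to the parity of the summation index, or use $q^{2N}=1-(1-q^{2N})$ to write the sum as a combination of two sums of the previous type with $a\pm1$. Each of these then evaluates by the $k\equiv a$ identity.
3. The prefactor $(-q;q^2)_\infty=(1+q)(-q^3;q^2)_\infty$ must then be redistributed. This redistribution should produce the two terms \eqref{Wkad-prod1} and \eqref{Wkad-prod2}, with residues $a+1$ and $a-1$.

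\emph{Main obstacle.} This last splitting is the step we expect to be hardest. We would first try the recurrence $J_{k,a}-J_{k,a-1}=\ldots$ to relate the sum for $a$ to the sums for $a\pm1$, and then verify the result by Jacobi triple product manipulations.
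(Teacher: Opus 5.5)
The paper gives no proof of this theorem. It is quoted from Andrews (both sides for $k\equiv a$), Kur\c{s}ung\"{o}z (\eqref{Wkad-sum}) and Kim--Yee (\eqref{Wkad-prod1}--\eqref{Wkad-prod2}). Deferring the sum sides to Kur\c{s}ung\"{o}z's Gordon marking is therefore at the paper's own level, but the two analytic steps you do spell out have genuine problems.

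\emph{The Bailey chain for \eqref{andw}.} As you describe it, the chain does not produce \eqref{andw}.
\begin{itemize}
\item In base $q^2$ the limit $\rho_1,\rho_2\to\infty$ multiplies $\alpha_n$ by $q^{2n^2}$ and inserts $q^{2N_j^2}$ on the $\beta$-side. Mixing one $\rho_1=-q$ step with such steps therefore gives the wrong quadratic form and the wrong modulus.
\item Already for $k=2$, which needs two steps from the unit pair (not $k-1=1$), either ordering yields a modulus-$8$ identity of G\"ollnitz--Gordon type. You should instead get $\sum_n q^{n^2}/(q^2;q^2)_n=(-q;q^2)_\infty$.
\item The factor $(-q;q^2)_\infty$ does not come from one special step. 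You must use $\rho_1\to\infty$, $\rho_2=-q$ (equivalently your $\rho_1=-q$, $\rho_2\to\infty$) at \emph{every} one of the $k$ levels. The ratios $(-q;q^2)_r/(-q;q^2)_n$ then telescope to a single outer $1/(-q;q^2)_n$.
\item A unit pair relative to $q^2$ does not produce the right linear terms either. With Bailey parameter $q^2$ every level acquires a linear term ($q^{N_j^2+N_j}$ with $(-q^2;q^2)$-factors, the $\overline{W}$ pattern).
\end{itemize}
What works is the mechanism of Section~5, run from the unit pair itself rather than its base change under Theorem~\ref{D1}. Apply Theorem~\ref{S2} in base $q$ $k$ times, inserting Proposition~\ref{pro4.1} after the $2$nd, $4$th, \dots, $(k-a)$th application. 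This ends with $\alpha_n=(-1)^nq^{(k+1)n^2/2}\bigl(q^{(k-a+1)n/2}+q^{-(k-a+1)n/2}\bigr)$. Then $q\mapsto q^2$ and Jacobi's triple product give \eqref{andw}.

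\emph{The product formula for $k\not\equiv a$.} The step you flag as the main obstacle is left open, and the routes you suggest do not work as stated. You have no Bailey pair whose $\alpha$-side could be split; you note yourself that it does not collapse to a theta series. The linear terms of \eqref{Wkad-sum} sit on the indices $a,a+2,\dots,k-1$. Those of the sums for $a+1$ and $a-1$ sit on $a+1,a+3,\dots,k-2$ and $a-1,a+1,\dots,k-2$, the complementary parity class. So no single-index manipulation such as $q^{2N_j}=1-(1-q^{2N_j})$ relates them directly.

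The missing idea is an elementary recurrence on the partitions themselves. Write $W_{k,a}(q)=\sum_nW_{k,a}(n)q^n$ and set $W_{k,0}(q)=0$.
\begin{itemize}
\item $W_{k,a+1}(q)-W_{k,a}(q)$ counts partitions with exactly $a$ ones.
\item Deleting one $1$ maps these onto partitions with exactly $a-1$ ones.
\item The inverse is defined precisely because $k-a$ is odd. If $f_1=a-1$, then $f_2\le k-a$ and $f_2$ is even, so $f_2\le k-a-1$. Adding a $1$ therefore keeps $f_1+f_2\le k-1$.
\end{itemize}
Hence $W_{k,a+1}(q)-W_{k,a}(q)=q\bigl(W_{k,a}(q)-W_{k,a-1}(q)\bigr)$, that is,
\[
(1+q)\,W_{k,a}(q)=W_{k,a+1}(q)+q\,W_{k,a-1}(q).
\]
Since $a\pm1\equiv k\pmod 2$, the first case of the theorem applies to both terms on the right. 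Using $(-q;q^2)_\infty/(1+q)=(-q^3;q^2)_\infty$ gives exactly \eqref{Wkad-prod1}--\eqref{Wkad-prod2}. For $a=1$ the second product vanishes through $(q^{0};q^{2k+2})_\infty=0$, consistent with $W_{k,0}=0$. Your fallback via $J_{k,a}-J_{k,a-1}$ points in this direction, but without the parity observation on $f_2$ it does not close.
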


Here, \eqref{andw} is due to Andrews \cite{and10}. 
In the case $k\not\equiv a \pmod{2}$, the multiple-sum identity \eqref{Wkad-sum} was derived by Kur\c{s}ung\"{o}z \cite{kur09} using a combinatorial method, 
while the product formula \eqref{Wkad-prod1}--\eqref{Wkad-prod2} was derived by Kim and Yee \cite{kim13}.

\begin{thm}[Andrews \cite{and10}, Kur\c{s}ung\"{o}z \cite{kur09}, Kim and Yee \cite{kim13}]\label{andoverw}
Let $\overline{W}_{k,a}(n)$ denote the number of partitions enumerated by $B_{k,a}(n)$ with the additional restriction that odd parts appear an even number of times.

If $k\ge a\ge 1$ with $k$ odd and $a$ even, then
\begin{align}\nonumber
\sum_{n\geq0}\overline{W}_{k,a}(n)q^n
&=\frac{(-q^2;q^2)_\infty(q^a,q^{2k+2-a},q^{2k+2};q^{2k+2})_\infty}{(q^2;q^2)_\infty}\\
&=\label{andwbar}\sum_{N_1\geq N_2\geq\cdots\geq N_{k-1}\geq0}
\frac{q^{N_1^2+N_2^2+\cdots+N_{k-1}^2+n_1+n_3+\cdots+n_{a-3}+N_{a-1}+N_a+\cdots+N_{k-1}}}
{(q^2;q^2)_{n_1}\cdots(q^2;q^2)_{n_{k-2}}(q^2;q^2)_{n_{k-1}}}.
\end{align}

If $k\ge a\ge 1$ with $k$ even and $a$ odd, then
\begin{align}
\sum_{n\geq 0}\overline{W}_{k,a}(n)q^n
&=\label{OWakd}\sum_{N_1\geq N_2\geq\cdots\geq N_{k-1}\geq0}
\frac{q^{N_1^2+N_2^2+\cdots+N_{k-1}^2+n_1+n_3+\cdots+n_{a-2}+N_a+\cdots+N_{k-1}}}
{(q^2;q^2)_{n_1}\cdots(q^2;q^2)_{n_{k-2}}(q^2;q^2)_{n_{k-1}}}\\
&=\label{OWakd2}\frac{(-q^2;q^2)_\infty(q^{a+1},q^{2k+1-a},q^{2k+2};q^{2k+2})_\infty}{(q^2;q^2)_\infty}.
\end{align}
\end{thm}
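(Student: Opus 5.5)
We prove Theorem \ref{andoverw} by treating the two parity cases in parallel. In each case we establish two identities: the sum side equals the generating function of $\overline{W}_{k,a}(n)$, and the sum side equals the product.

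\emph{Sum side equals $\sum_n \overline{W}_{k,a}(n)q^n$.} We use Gordon marking in the form of Warnaar/Kur\c{s}ung\"oz particle motion. A partition counted by $\overline{W}_{k,a}(n)$ is encoded by its frequency sequence $(f_1,f_2,\ldots)$. Marking the parts greedily gives $N_1\ge N_2\ge\cdots\ge N_{k-1}$, where $N_j$ counts the marked parts of index at least $j$, so that $n_j=N_j-N_{j+1}$ is the number of ``$j$-columns''. We first fix $(n_1,\dots,n_{k-1})$ and construct the minimal configuration. Because odd parts must occur an even number of times, the columns are stacked so that odd entries come in pairs. A short computation shows that the minimal weight is $N_1^2+\cdots+N_{k-1}^2$ plus the linear term in the exponent. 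This linear term is $N_{a-1}+N_a+\cdots+N_{k-1}+n_1+n_3+\cdots+n_{a-3}$ when $a$ is even, and $N_a+\cdots+N_{k-1}+n_1+n_3+\cdots+n_{a-2}$ when $a$ is odd. The terms $n_1+n_3+\cdots$ record that columns of odd height below $a$ must be shifted by one to avoid isolated odd parts, and the condition $f_1\le a-1$ fixes where the $N_j$ start. We then show that each group of $j$-columns can be moved forward only in steps of $2$, since a move of size $1$ would change the parity of some odd multiplicity. Moves that would violate the conditions are repaired by the standard crossing rule, under which the parity of every odd-part multiplicity is preserved. This gives the factor $1/(q^2;q^2)_{n_j}$ for each $j$ and proves \eqref{andwbar} and \eqref{OWakd} on the sum side.

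\emph{Sum side equals product.} We rewrite the multisum as the output of iterating Bailey's lemma in base $q^2$. The modified linear terms $n_1+n_3+\cdots$ mean that the innermost levels use a Bailey lemma with alternating parameters, in the spirit of the Bressoud--Ismail--Stanton change of base $q\to q^2$. Concretely, we apply the Bailey lemma $k-1$ times. At the steps $j\le a-2$ (or $a-3$) we pair $\rho_1=-q$, $\rho_2\to\infty$ on odd-indexed levels with $\rho_1,\rho_2\to\infty$ on even-indexed levels, and we start from the unit Bailey pair relative to $1$ (respectively relative to $q$, depending on the parity of $a$). The factors $(-q;q^2)$ produced by the $\rho_1=-q$ steps combine with the $(q^2;q^2)_\infty$ denominators and leave $(-q^2;q^2)_\infty/(q^2;q^2)_\infty$. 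The remaining alternating sum $\sum_r(-1)^r q^{(k+1)r^2+\cdots}$ is then summed by the Jacobi triple product in base $q^{2k+2}$, which yields the theta factors $(q^a,q^{2k+2-a},q^{2k+2};q^{2k+2})_\infty$ and $(q^{a+1},q^{2k+1-a},q^{2k+2};q^{2k+2})_\infty$ respectively. The first equality in each case, namely that the product equals $\sum_n\overline{W}_{k,a}(n)q^n$, was proved by Andrews in \cite{and10} via $J$-function recurrences. Alternatively, it follows from the two identities above.

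The main obstacle is the combinatorial step. Verifying that the minimal configurations have exactly the stated linear exponent, especially the boundary behaviour near $a$ that produces $n_1+n_3+\cdots+n_{a-3}+N_{a-1}$ versus $n_1+\cdots+n_{a-2}+N_a$, and that moves of size $2$ give a bijection, requires careful case analysis. My first approach would be to check small $k$ (for example $k=2,3$) by hand to pin down the minimal configurations before arguing in general.
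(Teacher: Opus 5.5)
The paper gives no proof of this theorem; it is quoted from Andrews, Kur\c{s}ung\"oz and Kim--Yee. Your argument therefore has to stand on its own, and its analytic half fails as you have specified it.

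\emph{The Bailey chain.} In base $q^2$, the specialization $\rho_1,\rho_2\to\infty$ is Theorem~\ref{S1} with $q\mapsto q^2$, i.e.\ $\beta_n\mapsto\sum_r a^rq^{2r^2}\beta_r/(q^2;q^2)_{n-r}$. So every level built this way carries $q^{2N_j^2}$. But \eqref{andwbar} and \eqref{OWakd} have $q^{N_j^2}$ over $(q^2;q^2)_{n_j}$ at every level, so each level must come from a step with exactly one parameter finite.

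The step $\rho_1=-q$, $\rho_2\to\infty$ relative to $1$ is Theorem~\ref{S2} with $q\mapsto q^2$, and it inserts $(-q;q^2)_{N_j}/(-q;q^2)_{N_{j-1}}$. With your interleaved full steps these ratios do not cancel. The summand then acquires factors such as $(-q^{2N_{j+1}+1};q^2)_{n_j}$, which are absent from the target. The outermost $1/(-q;q^2)_n$ also becomes a factor $(-q;q^2)_\infty$ on the product side. That is exactly the mechanism behind \eqref{andw} and Theorem~\ref{main1}.

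Nothing turns $(-q;q^2)_\infty/(q^2;q^2)_\infty$ into $(-q^2;q^2)_\infty/(q^2;q^2)_\infty$: the first has coefficient $1$ at $q^1$, while the second is a series in $q^2$. Relative to $q$ the same step gives $(-q;q^2)_r/(-q^2;q^2)_n$ and a stray $1/(q^3;q^2)_\infty$ in the limit, which is no better.

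\emph{A chain that does work.} Substitute $q\mapsto q^2$ only at the end, so the target summand is $q^{\sum_j(N_j^2\pm N_j)/2}/\prod_j(q)_{n_j}$. Start from the unit pair relative to $q$ and iterate Theorem~\ref{S2} with $a=q$, namely $\beta'_n=\sum_r\frac{(-q;q)_r\,q^{\binom{r+1}{2}}}{(q)_{n-r}(-q;q)_n}\beta_r$ and $\alpha'_r=q^{\binom{r+1}{2}}\alpha_r$. Now the $(-q;q)$ factors telescope completely, and the limit leaves exactly $1/(-q;q)_\infty$, which becomes $(-q^2;q^2)_\infty$. With $k$ such steps this already proves the cases $a\in\{1,2\}$.

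For larger even $a$, use $k+2-a$ such steps. Then convert to a pair relative to $1$ via $\alpha'_s=(1-q)\bigl(\frac{\alpha_s}{1-q^{2s+1}}-\frac{q^{2s-1}\alpha_{s-1}}{1-q^{2s-1}}\bigr)$. On the outer $a-2$ levels, alternate $\rho_2=-q$ and $\rho_2=-1$ (with $\rho_1\to\infty$), starting with $-q$. Their $\beta$-factors $\frac{(-q)_r q^{\binom r2}}{(-1)_n}$ and $\frac{(-1)_r q^{\binom{r+1}2}}{(-q)_n}$ telescope and produce the alternating $\pm N_j$.

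For $k$ even and $a$ odd, run the same chain with $a+1$ in place of $a$; the Bailey argument never uses the parity of $k$. This is consistent with $\overline W_{k,a}=\overline W_{k,a+1}$ for odd $a$.

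\emph{The combinatorial half.} This part is only an outline of Kur\c{s}ung\"oz's Gordon-marking argument. The weight of the minimal configurations, which is where $n_1+n_3+\cdots$ comes from, is asserted rather than computed. So is the claim that parity-preserving motions of the $j$-blocks are exactly independent steps of size $2$, which is what yields $\prod_j 1/(q^2;q^2)_{n_j}$. Since these two facts are the whole content of the sum-side identity, this part is at present a citation, not a proof.
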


Here, \eqref{andwbar} is due to Andrews \cite{and10}. 
When $k$ is even and $a$ is odd, the multiple-sum identity \eqref{OWakd} was derived by Kur\c{s}ung\"{o}z \cite{kur09} using a combinatorial method; the same identity was also obtained independently by Kim and Yee \cite{kim13}, who further derived the product formula \eqref{OWakd2}.

Thus, the infinite product and multisum generating functions for $W_{k,a}(n)$ and $\overline{W}_{k,a}(n)$ have been determined in all cases. 
For $W_{k,a}(n)$, the identity \eqref{andw} corresponds to the case $k\equiv a \pmod{2}$, whereas the opposite-parity case $k\not\equiv a \pmod{2}$ is described by the multisum formula \eqref{Wkad-sum} together with the product formula \eqref{Wkad-prod1}--\eqref{Wkad-prod2}. 
Unlike \eqref{andw}, whose right-hand side is a single infinite product, the corresponding product side in the case $k\not\equiv a \pmod{2}$ is expressed as a sum of two infinite products. 
Therefore, our objective is to derive, for the case $k\not\equiv a \pmod{2}$, an Andrews--Gordon type identity whose right-hand side coincides with that of \eqref{andw}. 
We obtain the following theorem, which will be proved later by using Bailey pairs and Bailey's lemma.

\begin{thm}\label{main1}
For $k\geq a\geq 1$ and $k\not\equiv a\;(\rm{mod}\;2)$,
\begin{align}\label{sg}
\nonumber\sum_{n_1, n_2, \cdots, n_{k-1}\geq 0}&
\frac{q^{N_1^2+N_2^2+\cdots+N_{k-1}^2+2N_a+2N_{a+2}+\cdots+2N_{k-1}}(-q;q^2)_{n_{k-1}}}
{(q^2;q^2)_{n_1}(q^2;q^2)_{n_2}\cdots(q^2;q^2)_{n_{k-2}}(q^4;q^4)_{n_{k-1}}}
\\&=\frac{(-q;q^2)_{\infty}(q^a,q^{2k+2-a},q^{2k+2};q^{2k+2})_{\infty}}{(q^2;q^2)_{\infty}}.
\end{align}
\end{thm}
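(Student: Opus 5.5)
We will prove \eqref{sg} with Bailey's lemma in base $q^2$ and $a=1$. The idea is to start from a Bailey pair whose innermost sum over $n_{k-1}$ produces the factor $(-q;q^2)_{n_{k-1}}/(q^4;q^4)_{n_{k-1}}$. We then iterate the Bailey lemma $k-2$ times, alternating between two specializations. The first is $\rho_1,\rho_2\to\infty$ (multiplier $q^{2N^2}$ in base $q^2$). The second is $\rho_1\to\infty,\ \rho_2=-q$ (or $-q^{-1}$), which contributes a factor $(-q;q^2)$ together with a half-power $q^{N^2}$. The parity pattern of the linear terms $2N_a+2N_{a+2}+\cdots+2N_{k-1}$, together with the fact that $k-a$ is odd, will dictate which shift ($\beta_n$ versus $q^{2n}\beta_n$, i.e.\ the $a$- versus $aq$-type lemma used by Andrews for the Gordon linear terms) is applied at each step. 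First we rewrite the summand in the variables $N_j$. The exponent $N_1^2+\cdots+N_{k-1}^2$ in base $q$ equals $\tfrac12\cdot 2N_j^2$, which already indicates the half-quadratic structure. So the natural route is the base-$q^2$ Bailey chain with $\rho_1\to\infty,\rho_2\to\infty$ at every step, each step contributing $q^{2N_j^2}$, applied to a pair obtained from the base-$q$ data in the standard Andrews \cite{and10} way.

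\textbf{Step 1 (seed pair).} Find the unit Bailey pair relative to $a=1$ (or $a=q^2$) in base $q^2$ with $\beta_n=\dfrac{(-q;q^2)_n}{(q^4;q^4)_n}\,q^{n^2+\epsilon n}=\dfrac{(-q;q^2)_n q^{n^2+\epsilon n}}{(q^2;q^2)_n(-q^2;q^2)_n}$. This is a ${}_2\phi_1$-type sum. We compute $\alpha_n$ by inverting with the $q$-Chu–Vandermonde / $q$-Gauss sum, aiming at an $\alpha_n$ of the shape $q^{n^2}(\pm1)^n(\text{simple factor})$. An equivalent route is to insert the parameter $\rho_2=-q$ at the last step of a standard chain, starting from the trivial unit pair $\beta_n=\delta_{n0}$. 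The relation $\frac{(-q;q^2)_n}{(q^4;q^4)_n}=\frac{(-q;q^2)_n}{(q^2;q^2)_n(-q^2;q^2)_n}$ is exactly what the Bailey lemma produces with $\rho_1=-q$, $\rho_2=-q^2$ (in base $q^2$, $a=1$), giving the factor $(\rho_1)_n(\rho_2)_n(aq/\rho_1\rho_2)^n/(q^2;q^2)_n\cdot\frac{1}{(\dots)}$. We will make that choice precise.

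\textbf{Step 2 (iteration).} Apply the Bailey lemma with $\rho_1,\rho_2\to\infty$ a further $k-2$ times. The linear terms are inserted by using $q^{2N_j}$-shifted versions at indices $j=a,a+2,\dots,k-1$, in the manner of Andrews' proof of \eqref{andw}. Each step multiplies $\alpha_r$ by $q^{2r^2}$ (times $q^{2r}$ when shifted). The resulting $\alpha_r$ then has the form $(-1)^r q^{(k+1)r^2+c r}$ times something symmetric in $r\mapsto -r-1$ or $-r$.

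\textbf{Step 3 (product side).} Let the outermost $N_0\to\infty$. This yields $\frac{1}{(q^2;q^2)_\infty}\sum_r\alpha_r\,(\dots)$, and the factor $(-q;q^2)_\infty$ comes out of the $\rho_2=-q$ step. The remaining bilateral sum $\sum_{r\in\mathbb Z}(-1)^rq^{(k+1)r^2+(k+1-a)r}$ is evaluated by Jacobi triple product in base $q^{2k+2}$ to give $(q^a,q^{2k+2-a},q^{2k+2};q^{2k+2})_\infty$.

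The main obstacle is Step 1 combined with the bookkeeping of Step 2. The $\rho_2=-q$ step must sit at the innermost level (on $n_{k-1}$), and the shifts must be arranged so that, when $k\not\equiv a$, the final exponent produces the \emph{same} theta function as in \eqref{andw}, rather than the two-product form \eqref{Wkad-prod1}--\eqref{Wkad-prod2}. To check the exponents, we will first verify the small cases $k=2,a=1$ and $k=3,a=2$ directly by $q$-expansion.
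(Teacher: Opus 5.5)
Your Step~2 is where the argument fails. In base $q^2$ with $a=1$, the specialization $\rho_1,\rho_2\to\infty$ gives $\beta'_n=\sum_r q^{2r^2}\beta_r/(q^2;q^2)_{n-r}$ and $\alpha'_r=q^{2r^2}\alpha_r$. So every such level contributes $q^{2N_j^2}$, whereas the left side of \eqref{sg} has only $q^{N_j^2}$ at each level. You noticed the ``half-quadratic structure'' but then chose full steps.

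A single $\rho_2=-q$ step at the innermost level does not rescue this. That step produces $(-q;q^2)_{N_{k-1}}/(-q;q^2)_{N_{k-2}}$, and the stray factor $1/(-q;q^2)_{N_{k-2}}$ is never cancelled by later full steps. Hence the resulting summand is not the one in \eqref{sg}, and no factor $(-q;q^2)_\infty$ survives in the limit, contrary to your Step~3. The quadratic bookkeeping in $\alpha_r$ also fails, since $k-2$ full steps already contribute $2(k-2)r^2$.

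What is needed is that \emph{all} $k-1$ levels are half-steps $\rho_1\to\infty$, $\rho_2=-q$ (Theorem \ref{S2} with $q\mapsto q^2$):
\[
\beta'_n=\sum_{r=0}^{n}\frac{(-q;q^2)_r\,q^{r^2}}{(q^2;q^2)_{n-r}(-q;q^2)_n}\,\beta_r,\qquad \alpha'_r=q^{r^2}\alpha_r .
\]
Then the $(-q;q^2)$ factors telescope. Only $(-q;q^2)_{N_{k-1}}$ remains in the innermost numerator and $1/(-q;q^2)_n$ outside, and the latter is exactly what produces $(-q;q^2)_\infty/(q^2;q^2)_\infty$ as $n\to\infty$.

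The seed and the linear terms are also not under control. The expression $(-q;q^2)_nq^{n^2+2n}/(q^4;q^4)_n$ should not be a $\beta_n$ that you invert. It is the innermost summand obtained by one half-step from the base-$q^2$ pair
\[
\beta_n=\frac{q^{2n}}{(q^4;q^4)_n},\qquad \alpha_0=1,\quad \alpha_n=(-1)^nq^{2n^2}(q^{2n}+q^{-2n}).
\]
The paper gets this pair from the unit pair via the Bressoud--Ismail--Stanton change of base (Theorem \ref{D1}) followed by $q\mapsto q^2$.

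Your proposed specialization $\rho_1=-q$, $\rho_2=-q^2$ does not produce it. Applied to the unit pair it gives $(q^{-1};q^2)_n/\bigl((q^2;q^2)_n(-q;q^2)_n(-1;q^2)_n\bigr)$, and in general it puts $(-q^2;q^2)_r$ in the numerator rather than the denominator. With the correct seed, the quadratic count is $2r^2+(k-1)r^2=(k+1)r^2$, as required.

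For the linear terms, an unspecified ``$a$ versus $aq$'' shift is not enough when $a=1$ is fixed along a chain of half-steps. The paper uses Proposition \ref{pro4.1} (that is, \cite[Proposition~4.1]{bre00}). In base $q^2$ it multiplies $\beta_n$ by $q^{2n}$ and upgrades a symmetric $\alpha_n=(-1)^nq^{2An^2}(q^{2(A-1)n}+q^{-2(A-1)n})$ to $(-1)^nq^{2An^2}(q^{2An}+q^{-2An})$. The chain is then:
\begin{enumerate}
\item insert this proposition after every second half-step, $(k-a-1)/2$ times;
\item perform $a$ further half-steps.
\end{enumerate}
This yields $\alpha_r=(-1)^rq^{(k+1)r^2}(q^{(k+1-a)r}+q^{-(k+1-a)r})$, after which Jacobi's triple product finishes the proof as in your Step~3.
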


It can be seen that the left-hand side of \eqref{sg} is a multisum form function, 
which differs from Andrews' multisum generating function form. Therefore, it cannot be directly interpreted as the partition number $W_{k,a}(n)$. 
For this reason, we aim to provide a combinatorial interpretation of it. 

Since we have not been able to find a suitable explanation based on Rogers--Ramanujan--Gordon type partitions, we will instead use lattice paths with three unit steps to 
offer an appropriate combinatorial interpretation. 
This lattice path approach goes back to Burge's work on recursive correspondences \cite{bur82}, which was reformulated by Andrews and Bressoud in terms of binary words and lattice paths \cite{andbre85}, 
and later recast by Bressoud in terms of paths with three unit steps \cite{bre89}.

We consider lattice paths of finite length lying in the first quadrant. These paths start on the $y$-axis and terminate on the $x$-axis. 
Each step belongs to one of the following three types:
\begin{itemize}
	\item North-East $NE$: $(x,y)\to (x+1,y+1)$,
	\item South-East $SE$: $(x,y)\to (x+1,y-1)$,
	\item East (Horizontal) $E$: $(x,0)\to(x+1,0)$.
\end{itemize}

The path is either empty or ends with a $SE$ step.
East steps are allowed only along the $x$-axis.

A peak is a vertex $(x,y)$ that is preceded by a $NE$ step and followed by a $SE$ step. 
The height of a vertex is its $y$-coordinate, and its weight is its $x$-coordinate. 
The major index of a path $\mathcal P$, denoted by $\operatorname{maj}(\mathcal P)$, is the sum of the weights of all peaks of $\mathcal P$. For the empty path, we set $\operatorname{maj}(\mathcal P)=0$.

We shall also use the notion of relative height of a peak. This notion was introduced by Bressoud for his lattice path model \cite{bre89}, and later reformulated in a simpler form by Berkovich and Paule \cite{berpau01}. 

The relative height of a peak $(x,y)$ is the largest integer $h$ for which we can find two vertices on the path, $(x',y-h)$ and $(x'',y-h)$, 
such that $x'<x<x''$ and such that between these two vertices there are no peaks of height greater than $y$ and every peak of height $y$ has abscissa at least $x$.

We now state the following theorem.

\begin{thm}\label{main}
For $k\geq a\geq 1$ with $k\not\equiv a\pmod 2$, let $S_{k,a}(n)$ be the number of lattice paths $\mathcal P$ of major index $n$ satisfying the following conditions:\begin{enumerate}
\item $\mathcal P$ starts at $(0,k+1-a)$ and remains at height at most $k$;
\item $E$-step sondition: the number of $E$ steps along the segment of $\mathcal P$ from the starting point to each peak of relative height $k$ or $k-1$, and along the segment between any two such peaks, is a multiple of $4$;
\item Parity condition: for every peak of weight $x$ and relative height $r$, one has
$x\equiv r\pmod 2$.

\end{enumerate}
Then
\begin{equation}\label{s}
\sum_{n=0}^{\infty}S_{k,a}(n)q^n
=
\sum_{n_1,\ldots,n_{k-1}\geq 0}
\frac{
q^{N_1^2+\cdots+N_{k-1}^2+2N_a+2N_{a+2}+\cdots+2N_{k-1}}
(-q;q^2)_{n_{k-1}}
}
{
(q^2;q^2)_{n_1}\cdots(q^2;q^2)_{n_{k-2}}
(q^4;q^4)_{n_{k-1}}
},
\end{equation}
where $ 1\leq j\leq k-1$.

\end{thm}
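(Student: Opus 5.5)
We prove the theorem with a Bressoud/Warnaar-type decomposition of the paths. For a fixed vector $(n_1,\ldots,n_{k-1})$ we find a unique minimal path, whose major index is the exponent $N_1^2+\cdots+N_{k-1}^2+2N_a+2N_{a+2}+\cdots+2N_{k-1}$. We then show that the admissible deformations of this path are counted by $1/(q^2;q^2)_{n_j}$ for $j\le k-2$ and by $(-q;q^2)_{n_{k-1}}/(q^4;q^4)_{n_{k-1}}$ for $j=k-1$. Summing over all $(n_1,\ldots,n_{k-1})$ gives \eqref{s}.

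\textbf{Step 1: classify peaks.} As in Bressoud \cite{bre89} and Berkovich--Paule \cite{berpau01}, we attach to each path satisfying conditions 1--3 the vector in which $n_j$ is the number of peaks of relative height $j$. Because the height is bounded by $k$, relative heights lie in $\{1,\ldots,k\}$. Peaks of relative height $k$ and $k-1$ are grouped together and both contribute to $n_{k-1}$. This is why the last index behaves differently and why the $E$-step condition refers only to those two heights.

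\textbf{Step 2: the minimal path.} For a given vector we build the path of smallest major index. It starts at $(0,k+1-a)$ and stacks the peaks as mountains: the $n_{k-1}$ tallest ones first, then height $k-2$, and so on, with no $E$ steps. We check that its major index equals $\sum N_j^2+2N_a+2N_{a+2}+\cdots+2N_{k-1}$. The quadratic part comes from the standard computation: a peak of relative height $j$ sits at a weight governed by the $N_i$ with $i\le j$. The linear terms $2N_a+2N_{a+2}+\cdots$ come from two sources. One is the shifted starting height $k+1-a$. The other is the parity condition $x\equiv r\pmod 2$: when parities clash, a peak must be pushed right by one, so every second level is shifted by $2$. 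Because $k\not\equiv a$, this alternating pattern ends exactly at $N_{k-1}$.

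\textbf{Step 3: particle motion.} Following Warnaar, each peak of relative height $j\le k-2$ may move right in steps of $2$; single steps are forbidden by parity. Peaks of equal relative height keep their order, and moving past other peaks exchanges them in a way that preserves relative heights. This yields a partition into at most $n_j$ parts, each doubled, giving $1/(q^2;q^2)_{n_j}$.

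For the $n_{k-1}$ top peaks there is an extra freedom, since a peak may have relative height either $k$ or $k-1$. Switching a peak between these two heights changes the parity of its weight, so we pair this switch with a unit shift. Switching the $i$th top peak adds the odd amount $2i-1$, and the set of switched peaks is recorded by $(-q;q^2)_{n_{k-1}}$. The $E$ steps before and between these peaks occur in blocks of $4$, which we encode as motions by multiples of $4$, giving $1/(q^4;q^4)_{n_{k-1}}$. We then verify that these two data are independent and that together with the moves of Step 3 they reach every admissible path exactly once.

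\textbf{Main obstacle.} The hardest part is the treatment of the top level in Step 3. We must show that switching between relative heights $k$ and $k-1$, combined with $E$-step insertions in multiples of $4$, is a bijection onto all paths satisfying conditions 2 and 3, and that the moves do not change the relative heights of the lower peaks. If a direct bijection proves too messy, the fallback is to refine by the number of peaks and derive a recurrence on the maximal height or the starting height $a$. We would then check that the multisum on the right of \eqref{s} satisfies the same recurrence: since the two index sets differ by $2$, the recurrence should link $a$ with $a\pm2$, and the initial conditions at $k=1,2$ are verified directly.
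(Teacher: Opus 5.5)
Your plan is essentially the paper's proof. There too $n_j$ counts peaks of relative height $j$, with $k-1$ and $k$ merged. The factor $1/(q^2;q^2)_{n_j}$ comes from right-moves in steps of $2$. The factor $(-q;q^2)_{n_{k-1}}$ comes from a volcanic uplift of selected top peaks; this is your $k-1\to k$ switch, adding $2r-1$ for the $r$-th top peak from the right. The factor $1/(q^4;q^4)_{n_{k-1}}$ comes from inserting $E$ steps in blocks of $4$. The paper builds the path stage by stage for $j=k-1,\dots,1$, and argues bijectivity only by reading this construction backwards level by level.

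When you write out Step 2, note that new relative-height-$1$ peaks are always inserted at the left, so the minimal path has the smallest relative heights leftmost and the top peaks rightmost. For example, for $(k,a)=(3,2)$ and $n_1=n_2=1$, the minimal path is $NE\,SE\,SE\,SE\,NE\,NE\,SE\,SE$ with major index $7$; putting the tall mountain first gives $11$, and right-moves from that path could not reach the true minimum. The linear terms $2N_a+2N_{a+2}+\cdots+2N_{k-1}$ then arise exactly because the pair of initial $SE$ steps inserted at each stage $j\in\{a,a+2,\dots,k-1\}$ shifts only the $N_j$ peaks present at that stage.
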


The remainder of this paper is organized as follows. In Section 2, we fix the number of peaks in the lattice paths and derive recurrence relations for the corresponding generating functions, from which the product formula follows. In Section 3, we give a direct lattice path interpretation of the multisum in \eqref{s}, which proves Theorem \ref{main}. In Section 4, we recall Bailey's lemma and the Bailey-pair transformations needed for the analytic proof. Finally, in Section 5, we use these transformations to prove Theorem \ref{main1}, namely identity \eqref{sg}.

\section{Path recurrences and product formula}\label{sec:fixedpeak}

In this section, we prove the product formula for the path family appearing in Theorem \ref{main}. More precisely, we show that the product side of \eqref{sg} is the generating function for these paths. The direct construction of the paths from the multisum side will be given in the next section. Here, we work only with the path conditions stated in Theorem \ref{main}.

The proof is based on a refinement according to the number of peaks. We fix the number $N$ of peaks and, for each admissible starting height, define a generating function $L_{k,i}(N)$. These functions satisfy a finite recurrence system with respect to  the index $i$. On the product side, after applying Jacobi's triple product and a summation formula to \eqref{sg}, we obtain functions $\mathcal U_{k,i}(N)$ satisfying the same recurrence system and the same initial values. A uniqueness argument then identifies $L_{k,i}(N)$ with $\mathcal U_{k,i}(N)$, and summing over $N$ gives the desired product formula.

This recurrence method is in the spirit of classical recurrence proofs of Andrews--Gordon type identities, but here the recurrence is organized by the number of peaks. It is also close to Mallet's treatment of generalized Bressoud--Burge paths; see \cite[Sec.~II.5]{mallet08}. The summation formula used below to expand the product side is obtained from \cite[Lem.~II.14]{mallet08} by a simple specialization and limit.

Throughout this section, let $k\geq 1$ be fixed, and assume that all indices $i$ satisfy $1\leq i\leq k+1$ and $i\equiv k+1\pmod 2$.

For such an index $i$ and for $N\geq 0$, let $\mathcal L_{k,i}(N)$ denote the set of lattice paths satisfying the parity condition and the $E$-step condition in Theorem \ref{main}, starting at height $k+1-i$, remaining at height at most $k$, and having exactly $N$ peaks. Define
\[
L_{k,i}(N)=\sum_{\mathcal P\in\mathcal L_{k,i}(N)}q^{\operatorname{maj}(\mathcal P)}.
\]
 We use the convention $L_{k,i}(N)=0$ for $N<0$, and $L_{k,i}(0)=1$.

Since $k\not\equiv a\pmod 2$, the path generating function in Theorem \ref{main} is $\sum_{N\geq0}L_{k,a}(N)$. Thus the product formula to be proved in this section is
\begin{equation}\label{eq:path-product}
\sum_{n\geq0}S_{k,a}(n)q^n=\sum_{N\geq0}L_{k,a}(N)=
\frac{(-q;q^2)_\infty(q^a,q^{2k+2-a},q^{2k+2};q^{2k+2})_\infty}{(q^2;q^2)_\infty}.
\end{equation}

\medskip
\noindent\textbf{A parity observation.}
Before deriving the recurrences, we record a simple observation explaining how the parity condition on peaks is reflected in the local structure of the paths.

Call a vertex an \emph{upturn} if it is followed by a $NE$ step and satisfies one of the following conditions: it is the initial vertex, it is preceded by a $SE$ step, or it lies on the $x$-axis and is preceded by an $E$ step. Thus an upturn is either an ordinary valley, or a point at which a horizontal step on the $x$-axis is followed by an upward step.

Let $P$ be a peak of relative height $r$, and let $V=(x_0,y_0)$ be the left endpoint of the horizontal line segment at height $h(P)-r$ which realizes the relative height of $P$. Then $V$ is an upturn. If the smaller branches between $V$ and $P$ are deleted, the remaining outer part from $V$ to $P$ consists of $r$ upward steps. Each deleted branch starts and ends at the same height, and hence has even horizontal length. Therefore the deletion changes the $x$-coordinate of $P$ only by an even integer, and we have
\[
x(P)\equiv x_0+r\pmod 2,
\]
where $x(P)$ denotes the weight of the peak $P$. Since $r$ is the relative height of $P$, the condition
$x(P)\equiv r\pmod 2$ is equivalent to $x_0\equiv0\pmod 2.$
Thus, for the paths considered here, the parity condition on peaks is equivalent to requiring that every upturn have even weight. 

With this observation in hand, we now decompose paths according to their leftmost local configuration. This gives the following recurrence system for the fixed-peak generating functions.

\begin{pro}\label{pro:path-recurrence}
For $N\geq1$, the functions $L_{k,i}(N)$ satisfy the following recurrence system. The first equation depends on the parity of $k$:
\begin{align}
L_{k,1}(N)&=q^{2N}L_{k,3}(N),
&&\text{if $k$ is even}, \label{eq:L-top-even}\\
L_{k,2}(N)&=q^{2N}L_{k,4}(N)+q^{2N-1}L_{k,2}(N-1),
&&\text{if $k$ is odd}. \label{eq:L-top-odd}
\end{align}
For every index $i$ such that
$3\leq i\leq k-1, i\equiv k+1\pmod 2,$
we have
\begin{equation}\label{eq:L-internal}
(1+q^{4N})L_{k,i}(N)
=q^{2N}\bigl(L_{k,i-2}(N)+L_{k,i+2}(N)\bigr)
+(q^{4N-2}+q^{2N-1})L_{k,i}(N-1).
\end{equation}
Finally, for $i=k+1$, we have
\begin{equation}\label{eq:L-bottom}
(1+q^{4N})L_{k,k+1}(N)
=2q^{2N}L_{k,k-1}(N)
+(q^{4N-2}+q^{2N-1})L_{k,k+1}(N-1).
\end{equation}
\end{pro}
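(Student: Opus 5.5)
We prove the recurrences with the classical Bressoud-type decomposition of a path by what happens at its left end. This combines two tools: shifting a path horizontally by $s$ steps multiplies $q^{\operatorname{maj}}$ by $q^{sN}$ when there are $N$ peaks, and the parity observation above replaces the peak-parity condition by the simpler condition that every upturn has even weight.

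\emph{Step 1: start at height $k+1-i$ with $i\le k-1$.} The path begins at height $h=k+1-i\ge 2$ with either a $NE$ or a $SE$ step.
\begin{itemize}
\item Suppose first that $\mathcal P$ begins with $SE$. Delete that step and the next step (which must also be $SE$, or else the second vertex is a valley of odd weight, a forbidden upturn). This gives a path from height $h-2$, i.e.\ index $i+2$, shifted left by $2$. The factor is $q^{2N}$, producing the term $q^{2N}L_{k,i+2}(N)$.
\item Suppose instead that $\mathcal P$ begins with $NE\,NE$. Cutting off these two steps gives a path of index $i-2$ and the term $q^{2N}L_{k,i-2}(N)$. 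But this also counts the paths whose first two steps are $NE$ followed by an immediate $SE$, i.e.\ a peak at weight $1$. Such a peak has relative height $1$ and weight $1$, which is allowed by parity.
\item Paths beginning $NE\,SE$ return to height $h$ at weight $2$. Removing the tent $NE\,SE$ deletes one peak of weight $1$ and shifts the other $N-1$ peaks by $2$, contributing $q^{2(N-1)+1}L_{k,i}(N-1)=q^{2N-1}L_{k,i}(N-1)$.
\item The overcounting correction $q^{4N}L_{k,i}(N)$ on the left and $q^{4N-2}L_{k,i}(N-1)$ on the right should come from inclusion–exclusion. Shifting by $2$ in both directions (up then down, i.e.\ $NE\,NE\,SE\,SE$ versus a shift by $4$) is counted twice. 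I would set up the bijection
\[
\text{(paths starting $SE\,SE$ from index $i-2$)}\;\leftrightarrow\;\text{(paths starting $NE\,NE\,SE\,SE$ from index $i$)}
\]
and read off the identity $(1+q^{4N})L_{k,i}(N)=\dots$ after moving terms.
\end{itemize}
The relative-height bookkeeping must be checked: prepending $NE\,NE$ raises the relative height of the first maximal peaks by $2$, preserving $x\equiv r\pmod 2$. Prepending also changes which peaks have relative height $k$ or $k-1$. Since the height stays at most $k$, peaks of relative height $k$ or $k-1$ are exactly those whose realizing upturn sits on or near the axis, so they are unaffected by operations away from the axis.

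\emph{Step 2: the boundary cases.}
\begin{itemize}
\item For $i=1$ ($k$ even), the start is at height $k$, so the first step must be $SE$. Parity forces $SE\,SE$, giving $q^{2N}L_{k,3}(N)$.
\item For $i=2$ ($k$ odd), the start is at height $k-1$. The step $NE\,NE$ would exceed height $k$, so only $SE\,SE$ (term $q^{2N}L_{k,4}(N)$) or a tent $NE\,SE$ (term $q^{2N-1}L_{k,2}(N-1)$) is possible.
\item For $i=k+1$, the start is on the axis. Here the $E$-steps enter: the first upturn must have even weight, and the $E$-step condition requires blocks of $E$'s of length divisible by $4$ before tall peaks. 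Stripping an initial $EE$ (weight-preserving parity) and then $NE\,NE$ should produce the factor $2q^{2N}L_{k,k-1}(N)$. The two copies correspond to $NE\,NE$ versus $E\,E\,NE\,NE$ with the $E$-count mod $4$ tracked, and the correction terms arise exactly as in Step 1.
\end{itemize}

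The main obstacle is the bottom equation \eqref{eq:L-bottom}. I must verify that the $E$-step mod-$4$ condition interacts correctly with the decomposition, i.e.\ that inserting or removing pairs of $E$ steps flips the residue class mod $4$ in a way that pairs classes bijectively and yields precisely the coefficient $2$. I would first check small $N$ (say $N=1,2$ with $k=2,3$) by enumeration to pin down the correct bijection before writing it generally.
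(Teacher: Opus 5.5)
\textbf{Verdict: there is a genuine gap.} Your top equations, and your $SESE$ and $NESE$ cases, agree with the paper. The remaining two pieces are not established.

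\textbf{The internal case.} You misidentify the overcount. The term $q^{2N}L_{k,i-2}(N)$ does not overcount paths with a peak at weight $1$. The spurious terms come from the paths $Q\in\mathcal L_{k,i-2}(N)$ that begin with $SESE$, whose generating function is $q^{2N}L_{k,i}(N)$. Prefixing $NENE$ to such a $Q$ creates a new peak at weight $2$, so the result has $N+1$ peaks and does not lie in $\mathcal L_{k,i}(N)$ at all. The paths of $\mathcal L_{k,i}(N)$ beginning $NENE$ therefore split into two classes:
\begin{itemize}
\item $NENE\,Q$ with $Q$ beginning $NE$, contributing $q^{2N}L_{k,i-2}(N)-q^{4N}L_{k,i}(N)$;
\item $NENESESE\,R$ with $R\in\mathcal L_{k,i}(N-1)$, contributing $q^{4N-2}L_{k,i}(N-1)$. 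After $NENESE$ the next step is forced to be $SE$ by upturn parity.
\end{itemize}
Your proposed bijection between $SESE$-paths of index $i-2$ and $NENESESE$-paths of index $i$ cannot exist at fixed $N$. Its two sides have tails in $\mathcal L_{k,i}(N)$ and $\mathcal L_{k,i}(N-1)$ respectively.

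\textbf{The bottom equation.} This is the real gap, and you leave it open. The second copy of $q^{2N}L_{k,k-1}(N)$ must account for \emph{all} paths beginning with an $E$ step. Your guess ``$NENE$ versus $EENENE$'' is wrong. Already for $k=3$, $N=1$, no admissible path begins $EENENE$: its peak would have relative height $2$ or $3$ with exactly two $E$ steps before it. Yet the $E$-initial paths there have generating function
$q^3/(1-q^2)+q^6(1+q)/(1-q^4)=q^2L_{3,2}(1)$.
Simply stripping $EE$ does not help either, since it shifts the mod-$4$ condition by $2$.

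The missing idea is the paper's lifting bijection:
\begin{enumerate}
\item By upturn parity every run of $E$ steps on the axis has even length, so the path begins $EE$. Remove it.
\item The segment from there to the next $EE$ on the axis contains no $E$ steps. So each of its peaks has exactly two $E$ steps before it, and by the $E$-step condition has relative height at most $k-2$. Since the maximal height of an excursion equals the relative height of its leftmost highest peak, the segment stays at height at most $k-2$.
\item Raise that segment by $2$ and replace the next $EE$ by $SESE$; if there is no further $EE$, append $SESE$ instead.
\end{enumerate}
The result lies in $\mathcal L_{k,k-1}(N)$ and every peak weight drops by $2$. Relative heights are preserved, because nothing to the left of the lifted segment lies below height $2$. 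Parity is preserved, and so is the mod-$4$ condition, since later $E$-counts drop by exactly $4$. The map is invertible: every path in $\mathcal L_{k,k-1}(N)$ first leaves height $2$ by a forced $SESE$, since a turn at height $1$ would be an upturn of odd weight.

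Without this or an equivalent construction, the coefficient $2$ in \eqref{eq:L-bottom} is unproved. Checking small cases does not replace it.
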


\pf
We use the parity observation above. In particular, an upturn cannot occur at weight $1$. Hence a path cannot begin with $SENE$, and, at the bottom, an initial $E$ step cannot be followed immediately by a $NE$ step.

Suppose first that $k$ is even. Then the relevant highest index is $i=1$, and the path starts at height $k$. Since the height is at most $k$, the first step cannot be $NE$. Since an $E$ step is allowed only at height $0$, the first step cannot be $E$ either. Thus the first step must be $SE$. The second step cannot be $NE$, for otherwise the path would begin with $SENE$ and would have an upturn of odd weight. Hence every such path begins with two $SE$ steps. Removing these two steps gives a path counted by $L_{k,3}(N)$. Conversely, adjoining two initial $SE$ steps to a path counted by $L_{k,3}(N)$ shifts all its $N$ peaks two units to the right. Therefore
\[
L_{k,1}(N)=q^{2N}L_{k,3}(N).
\]

Suppose next that $k$ is odd. Then the relevant highest index is $i=2$, and the path starts at height $k-1$. If the path begins by going down, the first two steps must be $SESE$, since $SENE$ is forbidden by the parity observation. Removing these two steps gives a path counted by $L_{k,4}(N)$, and adjoining them back shifts all $N$ peaks two units to the right. This gives the contribution
\[
q^{2N}L_{k,4}(N).
\]
The other possibility is that the path begins with a peak of relative height $1$, namely with the local pattern $NESE$. Removing this peak gives a path counted by $L_{k,2}(N-1)$. Conversely, adjoining $NESE$ to a path counted by $L_{k,2}(N-1)$ creates a new peak of weight $1$ and shifts the other $N-1$ peaks two units to the right. Hence the increase in the major index is
\[
1+2(N-1)=2N-1,
\]
and this gives the contribution
\[
q^{2N-1}L_{k,2}(N-1).
\]
Combining the two cases gives
\[
L_{k,2}(N)=q^{2N}L_{k,4}(N)+q^{2N-1}L_{k,2}(N-1).
\]

Now let
\[
3\leq i\leq k-1,\qquad i\equiv k+1\pmod 2.
\]
We decompose the paths according to their leftmost local configuration.

First consider paths obtained by adjoining two initial $NE$ steps to a path counted by $L_{k,i-2}(N)$. This operation shifts all $N$ peaks two units to the right, and hence gives the term
\[
q^{2N}L_{k,i-2}(N).
\]
However, if the path counted by $L_{k,i-2}(N)$ itself begins with two $SE$ steps, then after adjoining the two $NE$ steps the initial part becomes $NENESESE$. This is not to be counted in this case. The overcounted paths in $L_{k,i-2}(N)$ are precisely those obtained by adjoining two initial $SE$ steps to paths counted by $L_{k,i}(N)$, and this contributes $q^{2N}L_{k,i}(N)$ inside $L_{k,i-2}(N)$. After the outer factor $q^{2N}$, the overcount is
\[
q^{4N}L_{k,i}(N).
\]
Thus this first case contributes
\[
q^{2N}L_{k,i-2}(N)-q^{4N}L_{k,i}(N).
\]

Second, if the path begins by going down, then the first two steps must be $SESE$, since $SENE$ is forbidden. Removing these two steps gives a path counted by $L_{k,i+2}(N)$. Conversely, adjoining two initial $SE$ steps shifts all $N$ peaks two units to the right, and gives the contribution
\[
q^{2N}L_{k,i+2}(N).
\]

Third, the path may begin with a peak of relative height $1$, namely with the local pattern $NESE$. Removing this peak gives a path counted by $L_{k,i}(N-1)$. Conversely, adjoining $NESE$ creates a new peak of weight $1$ and shifts the remaining $N-1$ peaks two units to the right. Hence the contribution is
\[
q^{1+2(N-1)}L_{k,i}(N-1)=q^{2N-1}L_{k,i}(N-1).
\]

Fourth, the path may begin with a peak of relative height $2$, namely with the local pattern $NENESESE$. Removing this peak gives a path counted by $L_{k,i}(N-1)$. Conversely, adjoining $NENESESE$ creates a new peak of weight $2$ and shifts the remaining $N-1$ peaks four units to the right. Hence the contribution is
\[
q^{2+4(N-1)}L_{k,i}(N-1)=q^{4N-2}L_{k,i}(N-1).
\]

Adding these four contributions, we obtain
\[
L_{k,i}(N)
=
q^{2N}L_{k,i-2}(N)-q^{4N}L_{k,i}(N)
+q^{2N}L_{k,i+2}(N)
+q^{2N-1}L_{k,i}(N-1)
+q^{4N-2}L_{k,i}(N-1).
\]
Moving the overcounting term to the left-hand side gives
\[
(1+q^{4N})L_{k,i}(N)
=
q^{2N}\bigl(L_{k,i-2}(N)+L_{k,i+2}(N)\bigr)
+
(q^{4N-2}+q^{2N-1})L_{k,i}(N-1).
\]

It remains to consider $i=k+1$. In this case the path starts at height $0$. Thus it cannot begin with a $SE$ step. The cases in which the path begins with $NESE$ or $NENESESE$ are exactly as above and give
\[
(q^{4N-2}+q^{2N-1})L_{k,k+1}(N-1).
\]
The case in which the path begins with two $NE$ steps gives
\[
q^{2N}L_{k,k-1}(N)-q^{4N}L_{k,k+1}(N),
\]
by the same overcounting argument used above.

There is one additional possibility at the bottom: the path may begin with an $E$ step. Since $N\geq1$, the path eventually leaves the $x$-axis. By the parity observation, the first upward step after the initial horizontal part must occur at an even weight; in particular, the path begins with at least two consecutive $E$ steps. For such paths, remove the first two $E$ steps. Then raise by two units the portion of the path from immediately after these two steps up to the next occurrence of two consecutive $E$ steps on the $x$-axis, and replace that next pair of $E$ steps by $SESE$. This gives a path counted by $L_{k,k-1}(N)$. The transformation preserves the number of peaks and changes the major index by subtracting $2$ from each of the $N$ peak weights. Hence this class contributes
\[
q^{2N}L_{k,k-1}(N).
\]
The construction is reversible, so no path is lost or counted twice.

Combining the three contributions for $i=k+1$, we get
\[
L_{k,k+1}(N)
=
q^{2N}L_{k,k-1}(N)-q^{4N}L_{k,k+1}(N)
+
(q^{4N-2}+q^{2N-1})L_{k,k+1}(N-1)
+
q^{2N}L_{k,k-1}(N).
\]
After moving the overcounting term to the left-hand side, this becomes
\[
(1+q^{4N})L_{k,k+1}(N)
=
2q^{2N}L_{k,k-1}(N)
+
(q^{4N-2}+q^{2N-1})L_{k,k+1}(N-1).
\]

It remains to check that the above operations preserve the condition on $E$ steps in Theorem \ref{main}.  The initial blocks
\[
NENE,\qquad SESE,\qquad NESE,\qquad NENESESE
\]
contain no $E$ steps. Hence, for every old peak of relative height $k$ or $k-1$, the number of $E$ steps from the starting point to that peak, and the number of $E$ steps between any two such peaks, are unchanged. If a new leftmost peak is relevant in the exceptional small cases, then no $E$ step precedes it, and its $E$-step distance to any old relevant peak is the same as the old distance from the starting point to that peak.

For the operation involving bottom $E$ steps, the only change in the number of bottom $E$ steps before the later part of the path is by a multiple of $4$, since the construction uses two bottom blocks $EE$. Peaks inside the shifted segment have no $E$ steps between them. Thus the required multiple-of-$4$ condition is preserved in all cases.

This completes the proof.
\qed

We next record a finite summation needed to expand the product side. It is obtained by a simple specialization and limiting case of Mallet's summation formula \cite[Lem.~II.14]{mallet08}.

\begin{lem}\label{lem:finite-transform}
For every integer $r$,
\begin{equation}\label{eq:finite-transform}
\sum_{N\geq |r|}
q^{N^2}
\frac{(-q;q^2)_N}
{(q^2;q^2)_{N-r}(q^2;q^2)_{N+r}}
=
q^{r^2}\frac{(-q;q^2)_\infty}{(q^2;q^2)_\infty}.
\end{equation}
\end{lem}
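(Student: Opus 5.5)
By the symmetry $r\mapsto -r$ of the left-hand side of \eqref{eq:finite-transform} (the two factors $(q^2;q^2)_{N-r}$ and $(q^2;q^2)_{N+r}$ are simply interchanged), it suffices to treat $r\ge 0$. I would not go through Mallet's general formula. Instead I would reduce the identity directly to a limiting case of the $q$-Gauss summation in base $p=q^2$.

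\emph{Step 1: shift the index.} Substitute $N=n+r$ with $n\ge 0$, and split off the $n$-independent parts using
\[
(-q;q^2)_{n+r}=(-q;q^2)_r(-q^{2r+1};q^2)_n,\qquad (q^2;q^2)_{n+2r}=(q^2;q^2)_{2r}(q^{4r+2};q^2)_n .
\]
Together with $q^{(n+r)^2}=q^{r^2}q^{n^2+2rn}$, the left-hand side becomes
\[
q^{r^2}\frac{(-q;q^2)_r}{(q^2;q^2)_{2r}}\sum_{n\ge0}\frac{q^{n^2+2rn}(-q^{2r+1};q^2)_n}{(q^2;q^2)_n(q^{4r+2};q^2)_n}.
\]

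\emph{Step 2: match the summand to a known series.} Write $q^{n^2+2rn}=p^{n(n-1)/2}\,(q^{2r+1})^n$ with $p=q^2$. I would then invoke the $b\to\infty$ limit of the $q$-Gauss sum:
\[
\sum_{n\ge0}\frac{(a;p)_n(c;p)_n^{-1}}{(p;p)_n}(-1)^np^{n(n-1)/2}\Bigl(\frac{c}{a}\Bigr)^n=\frac{(c/a;p)_\infty}{(c;p)_\infty}.
\]
Take $a=-q^{2r+1}$ and $c=q^{4r+2}$. Then $c/a=-q^{2r+1}$, so $(-1)^n(c/a)^n=q^{(2r+1)n}$, which is exactly our summand. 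Hence the inner sum equals
\[
\frac{(-q^{2r+1};q^2)_\infty}{(q^{4r+2};q^2)_\infty}.
\]

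\emph{Step 3: recombine.} Multiplying by the prefactor gives
\[
(-q;q^2)_r(-q^{2r+1};q^2)_\infty=(-q;q^2)_\infty,\qquad (q^2;q^2)_{2r}(q^{4r+2};q^2)_\infty=(q^2;q^2)_\infty,
\]
which yields the right-hand side $q^{r^2}(-q;q^2)_\infty/(q^2;q^2)_\infty$.

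The only point needing care is Step 2. One must state the correct limiting form of $q$-Gauss, which can be justified by letting $b\to\infty$ in ${}_2\phi_1(a,b;c;p,c/(ab))$, with convergence for $|q|<1$ being routine. One must also check that the sign $(-1)^n$ cancels exactly against the negative $a$. Everything else is bookkeeping with shifted factorials.
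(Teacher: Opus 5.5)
Your proof is correct, and it is essentially the paper's own ``direct proof'' of \eqref{eq:finite-transform} given at the end of the paper. That proof uses the same shift $N=r+m$, the same splitting of $(-q;q^2)_N$ and $(q^2;q^2)_{N+r}$, and the same $B\to\infty$ limit of the $q$-Gauss sum in base $q^2$ with $A=-q^{2r+1}$, $C=q^{4r+2}$. Your sign check $(-1)^n(c/a)^n=q^{(2r+1)n}$ is right. The paper's primary argument instead specializes Mallet's Lemma II.14 ($q\mapsto q^2$, $a\mapsto q^{-1}$, $b\to 0$), which is only a more general packaging of the same summation.
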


\pf
This identity is a specialization of \cite[Lem.~II.14]{mallet08}. We include the short derivation for completeness. In Mallet's notation, where $(z)_m=(z;q)_m$, the lemma states that, for every integer $n$,
\[
\sum_{N\geq |n|}
\frac{
(-q^n/a,-q^n/b)_{N-n}(abq)^{N-n}(-aq,-bq)_n
}
{
(q)_{N+n}(q)_{N-n}
}
=
\frac{(-aq,-bq)_\infty}{(q,abq)_\infty}.
\]
We now make the substitutions
\[
q\mapsto q^2,\qquad a\mapsto q^{-1},\qquad n\mapsto r,
\]
and then let $b\to0$.

It is enough to consider $r\geq0$, since both sides of \eqref{eq:finite-transform} are invariant under $r\mapsto -r$. After the above substitutions, the left-hand side of Mallet's identity becomes
\[
\sum_{N\geq r}
\frac{
(-q^{2r+1};q^2)_{N-r}
(-q^{2r}/b;q^2)_{N-r}
(bq)^{N-r}
(-q,-bq^2;q^2)_r
}
{
(q^2;q^2)_{N+r}(q^2;q^2)_{N-r}
}.
\]
Put $m=N-r$. Then
\[
\lim_{b\to0}
(-q^{2r}/b;q^2)_m(bq)^m
=
q^{m^2+2rm}
=
q^{N^2-r^2}.
\]
Moreover,
\[
(-q^{2r+1};q^2)_{N-r}(-q;q^2)_r
=
(-q;q^2)_N,
\]
and
\[
\lim_{b\to0}(-bq^2;q^2)_r=1.
\]
Thus the left-hand side tends to
\[
\sum_{N\geq r}
q^{N^2-r^2}
\frac{(-q;q^2)_N}
{(q^2;q^2)_{N-r}(q^2;q^2)_{N+r}}.
\]

On the right-hand side, the same substitutions give
\[
\frac{(-q,-bq^2;q^2)_\infty}{(q^2,bq;q^2)_\infty},
\]
and hence, as $b\to0$,
\[
\frac{(-q,-bq^2;q^2)_\infty}{(q^2,bq;q^2)_\infty}
\longrightarrow
\frac{(-q;q^2)_\infty}{(q^2;q^2)_\infty}.
\]
Therefore
\[
\sum_{N\geq r}
q^{N^2-r^2}
\frac{(-q;q^2)_N}
{(q^2;q^2)_{N-r}(q^2;q^2)_{N+r}}
=
\frac{(-q;q^2)_\infty}{(q^2;q^2)_\infty}.
\]
Multiplying both sides by $q^{r^2}$ proves \eqref{eq:finite-transform} for $r\geq0$. The case $r<0$ follows by the symmetry $r\mapsto -r$.
\qed

By Jacobi's triple product identity,
\begin{equation}\label{eq:jtp-i}
(q^i,q^{2k+2-i},q^{2k+2};q^{2k+2})_\infty
=\sum_{r\in\mathbb Z}(-1)^r q^{(k+1)r^2+(k+1-i)r}.
\end{equation}
Combining \eqref{eq:finite-transform} with \eqref{eq:jtp-i}, we obtain
\begin{equation}\label{eq:product-side-expansion}
\frac{(-q;q^2)_\infty(q^i,q^{2k+2-i},q^{2k+2};q^{2k+2})_\infty}{(q^2;q^2)_\infty}
=
\sum_{N\geq0}q^{N^2}(-q;q^2)_N
\sum_{r=-N}^{N}
\frac{(-1)^r q^{kr^2+(k+1-i)r}}
{(q^2;q^2)_{N-r}(q^2;q^2)_{N+r}}.
\end{equation}
This motivates the definition
\begin{equation}\label{eq:U-def}
\mathcal U_{k,i}(N)
=q^{N^2}(-q;q^2)_N
\sum_{r=-N}^{N}(-1)^r
\frac{q^{kr^2+(k+1-i)r}}
{(q^2;q^2)_{N-r}(q^2;q^2)_{N+r}}.
\end{equation}
Thus \eqref{eq:product-side-expansion} says that the product side is $\sum_{N\geq0}\mathcal U_{k,i}(N)$.

\begin{pro}\label{pro:U-recurrence}
For $N\geq1$, the functions $\mathcal U_{k,i}(N)$ satisfy the same recurrence system as the functions $L_{k,i}(N)$ in Proposition \ref{pro:path-recurrence}, with the same initial values
$\mathcal U_{k,i}(0)=1$.

\end{pro}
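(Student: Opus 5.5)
The plan is to reduce every recurrence in Proposition \ref{pro:path-recurrence} to a term-by-term identity for the inner sums in \eqref{eq:U-def}. Write
\[
\mathcal U_{k,i}(N)=q^{N^2}(-q;q^2)_N\,F_i(N),\qquad
F_i(N)=\sum_{r}(-1)^r\frac{q^{kr^2+(k+1-i)r}}{(q^2;q^2)_{N-r}(q^2;q^2)_{N+r}}.
\]
The sum may be taken over all $r\in\mathbb Z$, since $1/(q^2;q^2)_m=0$ for $m<0$. The initial value $\mathcal U_{k,i}(0)=1$ is immediate, because only $r=0$ survives.

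The prefactor satisfies
\[
q^{N^2}(-q;q^2)_N=q^{2N-1}(1+q^{2N-1})\,q^{(N-1)^2}(-q;q^2)_{N-1},
\]
and $q^{2N-1}(1+q^{2N-1})=q^{4N-2}+q^{2N-1}$. This is exactly the coefficient of the $(N-1)$-terms in \eqref{eq:L-internal} and \eqref{eq:L-bottom}. So those two recurrences are equivalent to
\[
(1+q^{4N})F_i(N)-q^{2N}\bigl(F_{i-2}(N)+F_{i+2}(N)\bigr)=F_i(N-1).
\]
Shifting $i$ by $\mp2$ multiplies the $r$-th summand by $q^{\pm2r}$. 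Hence the left-hand summand carries the factor
\[
1+q^{4N}-q^{2N+2r}-q^{2N-2r}=(1-q^{2N+2r})(1-q^{2N-2r}).
\]
This factor cancels the top factors of both $q$-Pochhammer symbols and turns the denominator into $(q^2;q^2)_{N-1-r}(q^2;q^2)_{N-1+r}$. That is precisely the summand of $F_i(N-1)$, so \eqref{eq:L-internal} holds termwise.

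For the bottom equation \eqref{eq:L-bottom} I would use the same computation with $i=k+1$. The only issue is the nonexistent index $i+2=k+3$, and I would show $F_{k+3}=F_{k-1}$. Their summands have linear exponents $-2r$ and $+2r$ respectively, and the substitution $r\mapsto -r$ interchanges them while leaving the denominators invariant. This produces the factor $2q^{2N}F_{k-1}(N)$.

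For the top equation with $k$ even, $i=1$, I would form $F_1(N)-q^{2N}F_3(N)$. Its summand is
\[
(-1)^r q^{kr^2+kr}\,\frac{1-q^{2N-2r}}{(q^2;q^2)_{N-r}(q^2;q^2)_{N+r}}
=\frac{(-1)^r q^{kr^2+kr}}{(q^2;q^2)_{N-r-1}(q^2;q^2)_{N+r}}.
\]
Under $r\mapsto -r-1$ the quadratic form $kr^2+kr$ is invariant, the two denominators swap, and the sign $(-1)^r$ changes. So the sum equals its own negative and vanishes, which gives \eqref{eq:L-top-even}.

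The main obstacle is the top equation \eqref{eq:L-top-odd} for $k$ odd, $i=2$. After removing the prefactor, the claim becomes
\[
S:=\sum_r\frac{(-1)^r q^{kr^2+(k-1)r}}{(q^2;q^2)_{N-r-1}(q^2;q^2)_{N+r}}=F_2(N-1).
\]
My plan is to average $S$ with its image under $r\mapsto -r-1$. The image has exponent $kr^2+(k+1)r+1$ and opposite sign, so
\[
2S=\sum_r\frac{(-1)^r q^{kr^2+(k-1)r}\bigl(1-q^{2r+1}\bigr)}{(q^2;q^2)_{N-r-1}(q^2;q^2)_{N+r}}.
\]
Next I would write $1-q^{2r+1}$ as a combination of $1-q^{2N+2r}$ and a remainder, so as to cancel the top factor of $(q^2;q^2)_{N+r}$. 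The remainder should then be matched, after a further reflection, against the extra piece. If this direct splitting does not close, the fallback is to compare both sides as polynomials by induction on $N$ using \eqref{eq:L-internal} already proved. Another option is to verify the identity via the $q$-binomial expansion $F_i(N)=\frac{1}{(q^2;q^2)_{2N}}\sum_r(-1)^r q^{kr^2+(k+1-i)r}{2N\brack N-r}_{q^2}$ and apply the standard Pascal recurrences.
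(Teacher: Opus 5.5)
Your treatment of the initial values, of \eqref{eq:L-internal} (through the factorization $1+q^{4N}-q^{2N+2r}-q^{2N-2r}=(1-q^{2N+2r})(1-q^{2N-2r})$), of \eqref{eq:L-bottom} via $F_{k+3}=F_{k-1}$, and of \eqref{eq:L-top-even} via $r\mapsto -r-1$ is correct and essentially matches the paper. The gap is in \eqref{eq:L-top-odd}, the case you call the main obstacle. The reduced identity you set out to prove is false. In \eqref{eq:L-top-odd} the $(N-1)$-term carries only the coefficient $q^{2N-1}$, not $q^{4N-2}+q^{2N-1}$. So dividing by $q^{N^2}(-q;q^2)_N=q^{2N-1}(1+q^{2N-1})\,q^{(N-1)^2}(-q;q^2)_{N-1}$ leaves
\[
(1+q^{2N-1})\,S=F_2(N-1),
\]
not $S=F_2(N-1)$. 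Already for $k=3$, $N=1$ one has $S=(1-q)/(1-q^2)=1/(1+q)$, while $F_2(0)=1$. Hence no splitting of $1-q^{2r+1}$ can close to your stated target. Your fallbacks are only sketched, and they could not establish a false identity anyway; note also that the $F_i(N)$ are rational functions, not polynomials. As written, the odd-$k$ top equation is not proved.

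With the corrected target, your own idea finishes in two lines. Rewrite $F_2(N-1)$ over the denominator $(q^2;q^2)_{N-r-1}(q^2;q^2)_{N+r}$ by inserting the factor $1-q^{2N+2r}$. This gives $F_2(N-1)=S-q^{2N}T$, where
\[
T=\sum_r\frac{(-1)^r q^{kr^2+(k+1)r}}{(q^2;q^2)_{N-r-1}(q^2;q^2)_{N+r}}.
\]
Your reflection $r\mapsto -r-1$ shows $S=-qT$, and therefore $F_2(N-1)=(1+q^{2N-1})S$. Equivalently, $(1+q^{2N-1})S-F_2(N-1)$ equals $q^{2N-1}$ times a sum whose numerator is $(-1)^r q^{kr^2+(k-1)r}(1+q^{2r+1})$. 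That summand is odd under $r\mapsto -r-1$, so the sum vanishes. This is exactly the cancellation the paper obtains after rewriting the $(N-1)$-layer via the ratio $U_b(N-1,r)/U_b(N,r)$.
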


\pf
The initial value is immediate from \eqref{eq:U-def}.  Put
\[
b=k+1-i
\]
and denote the summand in \eqref{eq:U-def} by
\[
U_b(N,r)
=
q^{N^2}(-q;q^2)_N(-1)^r
\frac{q^{kr^2+br}}
{(q^2;q^2)_{N-r}(q^2;q^2)_{N+r}}.
\]
Thus
\[
\mathcal U_{k,i}(N)=\sum_{r=-N}^{N}U_b(N,r).
\]
Since replacing $i$ by $i-2$ increases $b$ by $2$, while replacing $i$ by $i+2$ decreases $b$ by $2$, we have
\[
\mathcal U_{k,i-2}(N)=\sum_{r=-N}^{N}q^{2r}U_b(N,r),
\qquad
\mathcal U_{k,i+2}(N)=\sum_{r=-N}^{N}q^{-2r}U_b(N,r).
\]

For $|r|\leq N-1$, a direct calculation gives
\[
\frac{U_b(N-1,r)}{U_b(N,r)}
=
q^{-2N+1}
\frac{(1-q^{2N-2r})(1-q^{2N+2r})}
{1+q^{2N-1}}.
\]
Equivalently,
\begin{equation}\label{eq:ratio-use}
(q^{4N-2}+q^{2N-1})U_b(N-1,r)
=
(1-q^{2N-2r})(1-q^{2N+2r})U_b(N,r).
\end{equation}

We first verify \eqref{eq:L-internal} with $L$ replaced by $\mathcal U$.  Subtracting the right-hand side of \eqref{eq:L-internal} from the left-hand side and collecting the coefficient of a fixed $U_b(N,r)$, we obtain
\[
1+q^{4N}
-q^{2N+2r}
-q^{2N-2r}
-(1-q^{2N-2r})(1-q^{2N+2r}).
\]
This expression is identically zero.  The endpoint terms $r=\pm N$ cause no additional contribution, since the factor
\[
(1-q^{2N-2r})(1-q^{2N+2r})
\]
vanishes at the corresponding endpoint.  Hence \eqref{eq:L-internal} holds for $\mathcal U_{k,i}(N)$.

Next consider the equation for $i=k+1$.  From the definition \eqref{eq:U-def}, the change of variables $r\mapsto -r$ gives the symmetry
\begin{equation}\label{eq:U-symmetry}
\mathcal U_{k,2k+2-i}(N)=\mathcal U_{k,i}(N).
\end{equation}
Applying \eqref{eq:L-internal} formally with $i=k+1$ and then using
\[
\mathcal U_{k,k+3}(N)=\mathcal U_{k,k-1}(N),
\]
which is the case $i=k-1$ of \eqref{eq:U-symmetry}, gives
\[
(1+q^{4N})\mathcal U_{k,k+1}(N)
=
2q^{2N}\mathcal U_{k,k-1}(N)
+
(q^{4N-2}+q^{2N-1})\mathcal U_{k,k+1}(N-1).
\]
Thus \eqref{eq:L-bottom} also holds with $L$ replaced by $\mathcal U$.

It remains to verify the two equations at the highest level.  Suppose first that $k$ is even.  Then
\[
\mathcal U_{k,1}(N)-q^{2N}\mathcal U_{k,3}(N)
=
q^{N^2}(-q;q^2)_N
\sum_{r=-N}^{N}(-1)^r
\frac{q^{kr^2+(k-2)r}(q^{2r}-q^{2N})}
{(q^2;q^2)_{N-r}(q^2;q^2)_{N+r}}.
\]
The term with $r=N$ is zero.  The remaining terms cancel in pairs under the involution
\[
r\longmapsto -r-1.
\]
Indeed, under this change of variables the summand changes sign, while the range
\[
-N\leq r\leq N-1
\]
is preserved.  Therefore
\[
\mathcal U_{k,1}(N)=q^{2N}\mathcal U_{k,3}(N),
\]
which is \eqref{eq:L-top-even} with $L$ replaced by $\mathcal U$.

Suppose next that $k$ is odd.  We consider
\[
\mathcal U_{k,2}(N)-q^{2N}\mathcal U_{k,4}(N)-q^{2N-1}\mathcal U_{k,2}(N-1).
\]
Using the ratio above to rewrite the $N-1$ layer in terms of the $N$ layer, this difference is equal to
\[
q^{N^2}(-q;q^2)_N
\sum_{r=-N}^{N}(-1)^r
\frac{
q^{kr^2+(k-1)r+2N-1}
(1-q^{2N-2r})(1+q^{2r+1})
}
{
(1+q^{2N-1})(q^2;q^2)_{N-r}(q^2;q^2)_{N+r}
}.
\]
The term with $r=N$ is zero because of the factor $1-q^{2N-2r}$.  On the remaining range, the summand is antisymmetric under the involution
\[
r\longmapsto -r-1.
\]
Hence the sum is zero.  Therefore
\[
\mathcal U_{k,2}(N)
=
q^{2N}\mathcal U_{k,4}(N)+q^{2N-1}\mathcal U_{k,2}(N-1),
\]
which is \eqref{eq:L-top-odd} with $L$ replaced by $\mathcal U$.

This proves that the functions $\mathcal U_{k,i}(N)$ satisfy the same recurrence system as the functions $L_{k,i}(N)$.
\qed

\begin{pro}\label{pro:uniqueness}
The recurrence system in Proposition \ref{pro:path-recurrence}, together with the initial values
$L_{k,i}(0)=1$
for all relevant indices $i$, has a unique solution over the ring of formal power series in $q$.
\end{pro}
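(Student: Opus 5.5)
We argue by induction on $N$: assuming the values $L_{k,i}(N-1)$ are known for all relevant $i$, we show that the equations of Proposition \ref{pro:path-recurrence} determine the values $L_{k,i}(N)$ uniquely. At level $N$ the system is linear in the unknowns $x_i=L_{k,i}(N)$, where $i$ runs over the indices with $1\le i\le k+1$ and $i\equiv k+1\pmod 2$. Let $m$ be the number of such indices. The system consists of one top equation (\eqref{eq:L-top-even} or \eqref{eq:L-top-odd}), the internal equations \eqref{eq:L-internal}, and the bottom equation \eqref{eq:L-bottom}, so there are exactly $m$ equations in $m$ unknowns. The terms involving level $N-1$ are known and go to the right-hand side. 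Hence the system has the form $A x=c$, where $A$ is a tridiagonal matrix with entries in $\mathbb Z[[q]]$ and $c$ is known by the induction hypothesis. For $N=0$ the values are fixed by the initial conditions $L_{k,i}(0)=1$. (The degenerate case where only the index $k+1$ is relevant, namely $k=1$, is handled directly: \eqref{eq:L-bottom} then involves no neighbour and can be solved outright. It does not affect the argument below.)

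Uniqueness follows once $A$ is invertible over $\mathbb Z[[q]]$. For that it suffices that $\det A$ has constant term $\pm1$, because formal power series with unit constant term are invertible. So we reduce $A$ modulo $q$. Since $N\ge1$, every off-diagonal entry of $A$ is a multiple of $q^{2N}$, and therefore vanishes mod $q$. The diagonal entries are $1$ in the top equation and $1+q^{4N}$ in the internal and bottom equations, all congruent to $1$ mod $q$. Thus $A\equiv I\pmod q$, so $\det A\equiv1\pmod q$ and $A$ is invertible. This gives a unique $x$ at level $N$, and induction on $N$ finishes the proof.

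Equivalently, one can argue coefficientwise. Write each equation as $x_i=(\text{known})+q^{2N}(\ldots)-q^{4N}x_i$. The coefficient of $q^n$ in $x_i$ is then expressed through coefficients of degree $<n$ of the $x_j$, together with the known level-$(N-1)$ data. This gives a triangular induction on the degree $n$, and the conclusion is the same.

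The only delicate point is the bookkeeping of the system's shape. We must check that, for each parity of $k$, the equations are indexed exactly by the relevant indices and that no undefined index appears. In particular the bottom equation uses $L_{k,k-1}$ and not $L_{k,k+3}$, and the top equation for odd $k$ begins at $i=2$ and uses $L_{k,4}$. With this checked, $A$ is a square matrix congruent to the identity mod $q$, so there is no real obstacle.
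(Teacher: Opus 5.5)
Your proof is correct and is essentially the paper's argument: induction on $N$, with the layer-$N$ linear system having a coefficient matrix congruent to the identity modulo $q$ (diagonal entries $1$ or $1+q^{4N}$, off-diagonal entries divisible by $q^{2N}$), hence invertible over formal power series. Your explicit count of equations and unknowns and the coefficientwise reformulation are harmless additions, and the aside on $k=1$ is unnecessary, since that case never arises when $k\ge a\ge 1$ and $k\not\equiv a \pmod 2$.
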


\pf
Fix $N\geq1$ and arrange the current-layer unknowns $L_{k,i}(N)$, where
$i\equiv k+1\pmod 2$,
into a finite vector. Once the layer $N-1$ is known, the recurrences form a finite linear system for the layer $N$. Its diagonal entries are $1$ or $1+q^{4N}$, while every off-diagonal entry is divisible by $q$. Therefore, after reducing the coefficient matrix modulo $q$, we obtain the identity matrix. Hence the determinant of the coefficient matrix has constant term $1$, and so the matrix is invertible over the ring of formal power series in $q$.

Thus the layer $N$ is uniquely determined by the layer $N-1$. The result follows by induction from the initial layer $N=0$.
\qed

\begin{thm}\label{thm:path-product}
For $k\geq a\geq1$ with $k\not\equiv a\pmod 2$, the generating function for the lattice paths counted by $S_{k,a}(n)$ is given by
\begin{equation}\label{eq:path-product}
\sum_{n\geq0}S_{k,a}(n)q^n
=
\sum_{N\geq0}L_{k,a}(N)
=
\frac{(-q;q^2)_\infty
(q^a,q^{2k+2-a},q^{2k+2};q^{2k+2})_\infty}
{(q^2;q^2)_\infty}.
\end{equation}
\end{thm}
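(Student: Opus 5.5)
The plan is to assemble the three propositions just proved. First, the left equality in \eqref{eq:path-product} is essentially definitional. A path counted by $S_{k,a}(n)$ starts at height $k+1-a$, stays at height at most $k$, and satisfies the $E$-step and parity conditions. Since $k\not\equiv a\pmod 2$, the index $a$ satisfies $a\equiv k+1\pmod 2$ and $1\le a\le k<k+1$, so $a$ is one of the admissible indices $i$ of this section. Sorting these paths by their number $N$ of peaks then gives
\[
\sum_{n\ge0}S_{k,a}(n)q^n=\sum_{N\ge0}L_{k,a}(N).
\]
This is a legitimate formal power series identity, because every peak has weight at least $1$ and distinct peaks have distinct weights. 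A path with $N$ peaks therefore has major index at least $N(N+1)/2$, and each coefficient of $q^n$ receives contributions from only finitely many $N$.

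Second, I would identify $L_{k,i}(N)$ with $\mathcal U_{k,i}(N)$ for every admissible $i$ and every $N\ge0$. Proposition \ref{pro:path-recurrence} shows that the $L_{k,i}(N)$ satisfy the recurrence system, with $L_{k,i}(0)=1$. Proposition \ref{pro:U-recurrence} shows that the $\mathcal U_{k,i}(N)$ satisfy the same system with the same initial values. Proposition \ref{pro:uniqueness} then gives $L_{k,i}(N)=\mathcal U_{k,i}(N)$ by induction on $N$.

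Before using this, I would check that the boundary cases match. The system in Proposition \ref{pro:path-recurrence} references only indices $i\equiv k+1\pmod 2$ with $1\le i\le k+1$. The single potentially out-of-range index, $k+3$, is removed on the $\mathcal U$ side by the symmetry \eqref{eq:U-symmetry}, and this is already built into the bottom equation \eqref{eq:L-bottom}. For small $k$ (for instance $k=1$, where only $i=2=k+1$ occurs and the "top" and "bottom" equations coincide), I would verify directly that the two systems are literally the same set of equations. This is the step where care is needed. I expect it to be a routine check rather than a real obstacle, since the path recurrence and the $\mathcal U$ recurrence were derived case by case in parallel.

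Finally, summing over $N$ and applying \eqref{eq:product-side-expansion} with $i=a$ gives
\[
\sum_{N\ge0}L_{k,a}(N)=\sum_{N\ge0}\mathcal U_{k,a}(N)=\frac{(-q;q^2)_\infty(q^a,q^{2k+2-a},q^{2k+2};q^{2k+2})_\infty}{(q^2;q^2)_\infty}.
\]
The interchange of summations behind \eqref{eq:product-side-expansion} (Jacobi triple product followed by Lemma \ref{lem:finite-transform} summed over $r$) is justified formally. The factor $q^{N^2}$, together with $kr^2+(k+1-a)r\ge0$ for $|r|\le N$, ensures that only finitely many pairs $(N,r)$ contribute to each power of $q$. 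This completes the proof.
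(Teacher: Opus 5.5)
Your proposal is correct and is the paper's proof. You obtain $L_{k,i}(N)=\mathcal U_{k,i}(N)$ from Propositions \ref{pro:path-recurrence}, \ref{pro:U-recurrence} and \ref{pro:uniqueness}, then set $i=a$ (admissible because $a\equiv k+1\pmod 2$) and sum over $N$ using \eqref{eq:product-side-expansion}; your formal-convergence justifications, which the paper leaves implicit, are a harmless addition.

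One small correction to your aside: $k=1$ is vacuous, since no $a$ satisfies $1\le a\le 1$ with $a$ even. Also, for $k=1$ the top and bottom equations do not coincide: they refer to the out-of-range indices $4$ and $0$. So the smallest relevant case is $k=2$, where the system is well posed.
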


\pf
By Propositions \ref{pro:path-recurrence}, \ref{pro:U-recurrence}, and \ref{pro:uniqueness}, we have
\[
L_{k,i}(N)=\mathcal U_{k,i}(N)
\qquad
(N\geq0,\; i\equiv k+1\pmod2).
\]
Since $k\not\equiv a\pmod2$, we have $a\equiv k+1\pmod2$. Taking $i=a$ and summing over $N$ gives
\[
\sum_{N\geq0}L_{k,a}(N)
=
\sum_{N\geq0}\mathcal U_{k,a}(N).
\]
By \eqref{eq:product-side-expansion}, the last sum is equal to
\[
\frac{(-q;q^2)_\infty
(q^a,q^{2k+2-a},q^{2k+2};q^{2k+2})_\infty}
{(q^2;q^2)_\infty}.
\]
The first equality in \eqref{eq:path-product} follows from the definition of $L_{k,a}(N)$ as the fixed-peak generating function for the paths in Theorem \ref{main}. This proves \eqref{eq:path-product}.
\qed

Combining Theorem \ref{thm:path-product} with the lattice path interpretation in Theorem \ref{main} gives a recurrence-based proof of Theorem \ref{main1}. In the final section, we give an independent analytic proof of the same identity by using Bailey's lemma.

\section{Lattice path interpretation}

In this section, we provide a combinatorial interpretation of \eqref{s} in terms of lattice paths. 
We explain how the factors on the multisum side contribute to the construction of the corresponding lattice paths, verify that the resulting paths satisfy the required conditions, and finally show that the construction is reversible. 
This yields a bijection and hence proves Theorem \ref{main}.

To provide a lattice path interpretation of equation  \eqref{s}, some additional definitions and operations concerning lattice paths are required. 
 First, we define two fundamental operations: the volcanic uplift and the right-move operation.

\noindent\textbf{Volcanic uplift:} At each peak of the path, the lattice path is cut open, separated horizontally by two units, and a new peak is inserted whose height is one unit greater than the original peak (see Figure~\ref{volcanic_uplift}).

\begin{figure}[htbp]
	\centering
	\begin{tikzpicture}[scale=0.6, line width=1pt]
		
		\draw (0,0) -- (1,1) -- (2,0) -- (3,0);
		
		\draw[->, thick] (4, 0.5) -- (5, 0.5);
		
		\begin{scope}[shift={(6,0)}]
			\draw (0,0) -- (1,1);
			
			\draw (1,1) -- (2,2) -- (3,1);
			
			\draw (3,1) -- (4,0) -- (5,0);
			
			\draw[thick] (1, 1.10) -- (1, 0.90); 
			\draw[thick] (3, 1.10) -- (3, 0.90);
		\end{scope}
		
	\end{tikzpicture}
	\caption{The volcanic uplift operation.}
	\label{volcanic_uplift}
\end{figure}

\noindent\textbf{Right-move:} We apply the right-move operation to a peak of relative height $1$. 
Depending on the local configuration, this operation falls into one of the following three cases, illustrated from top to bottom in Figure~\ref{right_move_operations}:

\begin{enumerate}
	\item The peak is followed by an East step $E$.
	\item The peak is followed by a $NE$ step, which is in turn followed immediately by another $NE$ step.
	\item The peak is preceded by the origin or by a $SE$ step, and is followed by a $SE$ step.
\end{enumerate}

These three cases are shown in Figure~\ref{right_move_operations}. 
A further exceptional case occurs when the weight difference between the current peak and the next peak to its right is exactly $2$; in this situation, the current peak is left unchanged and the move is transferred to the next peak, 
as illustrated in Figure~\ref{right_move_case4}. 
In each case, the relative height of the moving peak is either preserved at $1$ or exchanged with the relative height of its neighboring peak.
\begin{figure}[htbp]
	\centering
	\begin{tikzpicture}[scale=0.8, line width=1pt]
		
		\begin{scope}[shift={(0,0)}]
			\draw (0,0) -- (1,1) -- (2,0) -- (3,0);
			\draw[->, thick] (3.6,0.5) -- (4.6,0.5);
			\begin{scope}[shift={(5.6,0)}]
				\draw (0,0) -- (1,0) -- (2,1) -- (3,0);
			\end{scope}
			\node[font=\small] at (4.1,-0.9) {Case 1: move past an $E$ step};
		\end{scope}
		
		\begin{scope}[shift={(0,-3)}]
			\draw (0,0) -- (1,1) -- (2,0) -- (3,1);
			\draw[->, thick] (3.6,0.5) -- (4.6,0.5);
			\begin{scope}[shift={(5.6,0)}]
				\draw (0,0) -- (1,1) -- (2,2) -- (3,1);
				\draw[thick] (1,1.1) -- (1,0.9);
			\end{scope}
			\node[font=\small] at (4.1,-1.0) {Case 2: move past a $NE$ step};
		\end{scope}
		
		\begin{scope}[shift={(0,-7)}]
			\draw (0,1) -- (1,2) -- (2,1) -- (3,0);
			\draw[thick] (2,1.1) -- (2,0.9);
			\draw[->, thick] (3.6,0.5) -- (4.6,0.5);
			\begin{scope}[shift={(5.6,0)}]
				\draw (0,1) -- (1,0) -- (2,1) -- (3,0);
				
			\end{scope}
			\node[font=\small] at (4.1,-1.2) {Case 3: move past a $SE$ step};
		\end{scope}

	\end{tikzpicture}
	
	\caption{The three cases of the right-move operation.}
	\label{right_move_operations}
\end{figure}

\begin{figure}[htbp]
	\centering
	\begin{tikzpicture}[scale=0.5, line width=1pt]
		
		\begin{scope}[shift={(0,0)}]
			\draw (0,0) -- (1,1) -- (2,0) -- (4,2) -- (5,1) -- (6,2) -- (8,0);
			
			\draw[thick] (3.08,1.08) -- (2.92,0.92);
			\draw[thick] (7.08,1.08) -- (6.92,0.92);
		\end{scope}
		
		\draw[->, thick] (8.8,1.2) -- (10.0,1.2);
		
		\begin{scope}[shift={(11,0)}]
			\draw (0,0) -- (2,2) -- (3,1) -- (4,2) -- (5,1) -- (6,2) -- (8,0);
			
			\draw[thick] (1.08,1.08) -- (0.92,0.92);
			\draw[thick] (7.08,1.08) -- (6.92,0.92);
		\end{scope}
		
		\draw[->, thick] (19.8,1.2) -- (21.0,1.2);
		
		\begin{scope}[shift={(22,0)}]
			\draw (0,0) -- (2,2) -- (3,1) -- (4,2) -- (6,0) -- (7,1) -- (8,0);
			
			\draw[thick] (1.08,1.08) -- (0.92,0.92);
			\draw[thick] (5.08,1.08) -- (4.92,0.92);
		\end{scope}
		
	\end{tikzpicture}
	\caption{If the current peak and the next peak to its right have weight difference $2$, then the current peak is left unchanged and the move is transferred to the next peak.}
	\label{right_move_case4}
\end{figure}




\medskip
\noindent\textbf{Proof of Theorem \ref{main}.}
\smallskip


We organize the construction according to the factors appearing on the multisum side of \eqref{s}. 
For each factor, we describe the corresponding lattice path operation and its contribution to the major index, and then show that the resulting construction is reversible.

\medskip
\noindent\textbf{\emph{Initial configuration.}}
We begin with the contribution corresponding to $N_{k-1}^{2}+2N_{k-1}$. 
First, insert $n_{k-1}$ peaks of relative height $1$ at the origin, with weights
\[
1,3,\dots,2n_{k-1}-1.
\]
Their total contribution to the major index is
\[
1+3+\cdots+(2n_{k-1}-1)=n_{k-1}^{2}=N_{k-1}^{2}.
\]
Next, insert two $SE$ steps at the beginning of the path. This shifts each of these peaks two units to the right, and hence contributes an additional $2N_{k-1}$ to the major index.

\medskip
\noindent\textbf{\emph{The factor $\boldsymbol{{1}/{(q^{4};q^{4})_{n_{k-1}}}}$.}}
Interpret this factor as the generating function for partitions
\[
4b_1\ge 4b_2\ge \cdots \ge 4b_{n_{k-1}}\ge 0.
\]
Starting from the rightmost peak of relative height $1$, insert $4b_1$ East steps before it, thereby moving it to the right by $4b_1$. 
Then insert $4b_2$ East steps before the next peak, and continue in the same way. 
This realizes the factor ${1}/{(q^{4};q^{4})_{n_{k-1}}}$.

\medskip
\noindent\textbf{\emph{The factor $\boldsymbol{(-q;q^{2})_{n_{k-1}}}$.}}
Applying a volcanic uplift to the $r$-th peak from the right increases the major index by $2r-1$. 
Therefore, the factor $(-q;q^{2})_{n_{k-1}}$ is realized by choosing some of the peaks of relative height $1$ created above and applying the volcanic uplift operation to them. 
As a consequence, the peaks of relative height $2$ produced in this way have even weights, while the remaining peaks of relative height $1$ have odd weights.

\medskip
\noindent\textbf{\emph{The inductive construction for $\boldsymbol{N_j^2}$, $\boldsymbol{1\le j\le k-2}$.}}
We proceed recursively for $j=k-2,k-3,\dots,1$. 
Assume that all peaks corresponding to $N_{j+1}$ have already been constructed. 
First, apply the volcanic uplift operation to each of these $N_{j+1}$ peaks, thereby increasing their relative heights by $1$. 
Then insert $n_j$ new peaks of relative height $1$ at the origin. 
The total contribution of this step to the major index is
\[
(1+3+\cdots+(2n_j-1))+2n_jN_{j+1}+N_{j+1}^{2}
= N_j^{2},
\]
since $N_j=n_j+N_{j+1}$.

\medskip
\noindent\textbf{\emph{The factor $\boldsymbol{{1}/{(q^{2};q^{2})_{n_j}}}$.}}
After the $n_j$ new peaks of relative height $1$ have been inserted, we apply the right-move operation to them in steps of size $2$. 
This yields the factor ${1}/{(q^{2};q^{2})_{n_j}}$.

\medskip
\noindent\textbf{\emph{The extra term $\boldsymbol{2N_j}$.}}
Whenever $j\ge a$ and $j\equiv a\equiv k-1\pmod 2$, we insert two additional $SE$ steps at the beginning of the path. 
This shifts all the currently existing $N_j$ peaks two units to the right, and hence contributes $2N_j$ to the major index.

\medskip
\noindent\textbf{\emph{Verification of the path conditions.}}
We now verify that the above construction produces a lattice path satisfying the conditions in Theorem \ref{main}. 
Indeed:
\begin{enumerate}
	\item Throughout the construction, pairs of $SE$ steps are inserted a total of ${(k+1-a)}/{2}$ times. Consequently, the starting point is shifted from $(0,0)$ to $(0,k+1-a)$.
	
	\item Every peak is created with relative height $1$ and can undergo at most $k-1$ volcanic uplifts. Therefore, the resulting path remains at or below height $k$.
	
	\item The $n_j$ peaks inserted at stage $j$ subsequently undergo exactly $j-1$ volcanic uplifts, and hence acquire final relative height $j$.
	
	\item The required parity condition is preserved throughout the construction. Indeed, the insertion of East steps, the insertion of pairs of $SE$ steps, and the right-move operation preserve the parity relation between the weight and the relative height of each existing peak, whereas the volcanic 
	uplift increases both quantities by $1$. Hence the weight and the relative height of every peak always have the same parity.
\end{enumerate}

Thus the resulting paths satisfy all the conditions stated in Theorem \ref{main}.

\medskip
\noindent\textbf{\emph{Reversibility of the construction.}}
It remains to explain how a lattice path counted by $S_{k,a}(n)$ can be read in reverse. Let $L$ be such a path. We recover the parameters by descending on the relative height. Fix $j$. After contracting all peaks of relative height less than $j$, the peaks of relative height $j$ remain unchanged, 
and the horizontal distances between them determine the effect of the right-move operation. Hence $n_j$ can be recovered from the resulting configuration.

When $j\ge a$, the forward construction also inserts pairs of initial $SE$ steps. Since the path starts at $(0,k+1-a)$, remains at or below height $k$, and satisfies the parity condition, after removing all peaks of relative height less than $j$, there are exactly
\[
2\left\lceil \frac{j-a+1}{2}\right\rceil
\]
initial $SE$ steps before the first peak of relative height $j$. These steps can therefore be removed uniquely. Thus the path can be reduced stage by stage, and at each stage we recover the corresponding contribution. This shows that the reverse procedure is well defined.

\medskip
\noindent\textbf{\emph{Conclusion.}}
Therefore, the above construction provides a combinatorial interpretation of the multisum side of \eqref{s}. Each summand on the left-hand side of \eqref{s} contributes lattice paths counted by $S_{k,a}(n)$, and the exponent of $q$ records the major index of the corresponding paths. Conversely, 
the reverse procedure shows how every such lattice path contributes to the left-hand side of \eqref{s}. Hence the left-hand side of \eqref{s} is the generating function for the lattice paths enumerated by $S_{k,a}(n)$. This establishes \eqref{s} combinatorially.

\qed

\begin{exam}\label{exam:52}
We illustrate the above construction in the case $(k,a)=(5,2)$ with
$(n_1,n_2,n_3,n_4)=(3,1,1,2).$
Then
$N_1=7, N_2=4, N_3=3, N_4=2,$
and hence
\[
N_1^2+N_2^2+N_3^2+N_4^2+2N_2+2N_4
=49+16+9+4+8+4=90.
\]

To specify a particular contribution, we choose $q^4$ 
from ${1}/{(q^4;q^4)_2}$, $q^3$ from $(-q;q^2)_2=(1+q)(1+q^3)$, $q^2$ 
from the factor ${1}/{(q^2;q^2)_1}$ corresponding to $n_3=1$, 
the constant term from the factor ${1}/{(q^2;q^2)_1}$ corresponding to
 $n_2=1$, and $q^6$ from ${1}/{(q^2;q^2)_3}$. The last contribution 
 is realized by moving the three new peaks by $4$, $2$, and $0$ units,
  respectively. Hence the total contribution is
\[
q^{90+4+3+2+0+6}=q^{105}.
\]

Based on this choice, we construct the corresponding lattice path.

\medskip
\noindent
Corresponding to $n_4=2$, we insert two peaks of relative height $1$ at the origin and then insert two initial $SE$ steps. 
\begin{center}
\begin{tikzpicture}[scale=0.45]

    \draw[-{Latex[length=3mm]}] (0,0) -- (15,0) node[right] {$x$};
    \draw[-{Latex[length=3mm]}] (0,0) -- (0,5) node[above] {$y$};

    \foreach \x in {1,2,...,14}
        \draw (\x,0.08) -- (\x,-0.08);
  	\foreach \y in {1,2,...,5}
        \draw (0.08,\y) -- (-0.08,\y);
    \draw[thick]
        (0,2)
        -- (1,1)   
        -- (2,0)   
        -- (3,1)   
        -- (4,0)   
        -- (5,1)   
        -- (6,0);  

\end{tikzpicture}
\end{center}

Since
${1}/{(q^4;q^4)_2}$
contributes $q^4$, we move the rightmost of these two peaks four units to the right. Moreover, since
$(-q;q^2)_2=(1+q)(1+q^3)$
contributes $q^3$, we apply a volcanic uplift to the left peak. This gives the following path.

\begin{center}
\begin{tikzpicture}[scale=0.45]

    \draw[-{Latex[length=3mm]}] (0,0) -- (15,0) node[right] {$x$};
    \draw[-{Latex[length=3mm]}] (0,0) -- (0,5) node[above] {$y$};

    \foreach \x in {1,2,...,14}
        \draw (\x,0.08) -- (\x,-0.08);
  	\foreach \y in {1,2,...,5}
        \draw (0.08,\y) -- (-0.08,\y);
    \draw[thick]
        (0,2)
        -- (1,1)
        -- (2,0)
        -- (3,1)
        -- (4,2)
        -- (5,1)
        -- (6,0)
        -- (7,0) -- (8,0) -- (9,0) -- (10,0)
        -- (11,1)
        -- (12,0);

    \fill (4,2) circle (2pt);
    \fill (11,1) circle (2pt);

\end{tikzpicture}
\end{center}

\noindent
Corresponding to $n_3=1$, we apply volcanic uplift to the existing peaks and insert one new peak of relative height $1$ at the origin. 

\begin{center}
\begin{tikzpicture}[scale=0.45]

    \draw[-{Latex[length=3mm]}] (0,0) -- (21,0) node[right] {$x$};
    \draw[-{Latex[length=3mm]}] (0,0) -- (0,5) node[above] {$y$};

    \foreach \x in {1,2,...,20}
        \draw (\x,0.08) -- (\x,-0.08);
  	\foreach \y in {1,2,...,5}
        \draw (0.08,\y) -- (-0.08,\y);
    \draw[thick]
        (0,2)
        -- (1,3)   
        -- (2,2)   
        -- (3,1)   
        -- (4,0)   
        -- (5,1)   
        -- (6,2)-- (7,3) -- (8,2)--(9,1)--(10,0)--(11,0)--(12,0)--(13,0)--(14,0)--(15,1)--(16,2)--(17,1)--(18,0);  

\end{tikzpicture}
\end{center}

Since${1}/{(q^2;q^2)_1}$
contributes $q^2$, we move this new peak two units to the right. We obtain the following lattice path.

\begin{center}
\begin{tikzpicture}[scale=0.45]

    \draw[-{Latex[length=3mm]}] (0,0) -- (21,0) node[right] {$x$};
    \draw[-{Latex[length=3mm]}] (0,0) -- (0,5) node[above] {$y$};

    \foreach \x in {1,2,...,20}
        \draw (\x,0.08) -- (\x,-0.08);
  	\foreach \y in {1,2,...,5}
        \draw (0.08,\y) -- (-0.08,\y);
    \draw[thick]
        (0,2)
        -- (1,1)
        -- (2,0)
        -- (3,1)
        -- (4,0)
        -- (5,1)
        -- (6,2)
        -- (7,3)
        -- (8,2)
        -- (9,1)
        -- (10,0)
        -- (11,0) -- (12,0) -- (13,0) -- (14,0)
        -- (15,1)
        -- (16,2)
        -- (17,1)
        -- (18,0);

\end{tikzpicture}
\end{center}

\noindent
Corresponding to $n_2=1$, we again apply volcanic uplift to all existing peaks and insert one new peak of relative height $1$ at the origin. 
The factor ${1}/{(q^2;q^2)_1}$
corresponding to $n_2=1$ contributes the constant term, so no right-move is performed at this stage.

\begin{center}
\begin{tikzpicture}[scale=0.45]

    \draw[-{Latex[length=3mm]}] (0,0) -- (30,0) node[right] {$x$};
    \draw[-{Latex[length=3mm]}] (0,0) -- (0,5) node[above] {$y$};

    \foreach \x in {1,2,...,29}
        \draw (\x,0.08) -- (\x,-0.08);
  	\foreach \y in {1,2,...,5}
        \draw (0.08,\y) -- (-0.08,\y);
    \draw[thick]
        (0,2)
        -- (1,3)
        -- (2,2)
        -- (3,1)
        -- (4,0)
        -- (5,1)
        -- (6,2)
        -- (7,1)
        -- (8,0)
        -- (9,1)
        -- (10,2)
        -- (11,3) -- (12,4) -- (13,3) -- (14,2)
        -- (15,1)
        -- (16,0)
        -- (17,0)
        -- (18,0)-- (19,0) -- (20,0)--(21,1)--(22,2)--(23,3)--(24,2)--(25,1)--(26,0)
		;

\end{tikzpicture}
\end{center}

Since
$2\ge a$and $2\equiv a\equiv k-1\pmod 2,$
we insert two additional initial $SE$ steps. This yields the following path.

\begin{center}
\begin{tikzpicture}[scale=0.45]

    \draw[-{Latex[length=3mm]}] (0,0) -- (30,0) node[right] {$x$};
    \draw[-{Latex[length=3mm]}] (0,0) -- (0,5) node[above] {$y$};

    \foreach \x in {1,2,...,29}
        \draw (\x,0.08) -- (\x,-0.08);
  	\foreach \y in {1,2,...,5}
        \draw (0.08,\y) -- (-0.08,\y);
    \draw[thick]
		(0,4)--(1,3)
        --(2,2)
        -- (3,3)
        -- (4,2)
        -- (5,1)
        -- (6,0)
        -- (7,1)
        -- (8,2)
        -- (9,1)
        -- (10,0)
        -- (11,1)
        -- (12,2)
        -- (13,3) -- (14,4) -- (15,3) -- (16,2)
        -- (17,1)
        -- (18,0)
        -- (19,0)
        -- (20,0)-- (21,0) -- (22,0)--(23,1)--(24,2)--(25,3)--(26,2)--(27,1)--(28,0)
		;

\end{tikzpicture}
\end{center}

\noindent
Corresponding to $n_1=3$, we apply volcanic uplift to all existing peaks and insert three new peaks of relative height $1$ at the origin.

\begin{center}
\begin{tikzpicture}[scale=0.35]

    \draw[-{Latex[length=3mm]}] (0,0) -- (43,0) node[right] {$x$};
    \draw[-{Latex[length=3mm]}] (0,0) -- (0,5) node[above] {$y$};

    \foreach \x in {1,2,...,42}
        \draw (\x,0.08) -- (\x,-0.08);
  	\foreach \y in {1,2,...,5}
        \draw (0.08,\y) -- (-0.08,\y);
    \draw[thick]
		(0,4)--(1,5)--(2,4)--(3,5)--(4,4)--(5,5)--(6,4)--
		(7,3)
        --(8,2)
        -- (9,3)--(10,4)--(11,3)--(12,2)--(13,1) -- (14,0) -- (15,1) -- (16,2)
		--(17,3)--(18,2)--(19,1)--(20,0)--(21,1) -- (22,2) -- (23,3) -- (24,4) -- (25,5) -- (26,4) -- (27,3) -- (28,2)
		--(29,1)--(30,0)--(31,0)--(32,0)--(33,0)
		--(34,0)--(35,1)--(36,2)--(37,3)--(38,4)--(39,3)--(40,2)--(41,1)--(42,0)
      ;
\end{tikzpicture}
\end{center}

Finally, since
${1}/{(q^2;q^2)_3}$
contributes $q^6$, we move these three peaks by $4$, $2$, and $0$ units, respectively. Hence we arrive at the following final lattice path.

\begin{center}
\begin{tikzpicture}[scale=0.35]

    \draw[-{Latex[length=3mm]}] (0,0) -- (43,0) node[right] {$x$};
    \draw[-{Latex[length=3mm]}] (0,0) -- (0,5) node[above] {$y$};

    \foreach \x in {1,2,...,42}
        \draw (\x,0.08) -- (\x,-0.08);
  	\foreach \y in {1,2,...,5}
        \draw (0.08,\y) -- (-0.08,\y);
    \draw[thick]
		(0,4)--(1,5)--(2,4)--(3,3)--(4,2)--(5,3)--(6,2)--
		(7,3)
        --(8,4)
        -- (9,3)--(10,2)--(11,3)--(12,2)--(13,1) -- (14,0) -- (15,1) -- (16,2)
		--(17,3)--(18,2)--(19,1)--(20,0)--(21,1) -- (22,2) -- (23,3) -- (24,4) -- (25,5) -- (26,4) -- (27,3) -- (28,2)
		--(29,1)--(30,0)--(31,0)--(32,0)--(33,0)
		--(34,0)--(35,1)--(36,2)--(37,3)--(38,4)--(39,3)--(40,2)--(41,1)--(42,0)
      ;
\end{tikzpicture}
\end{center}

\noindent
Now we have constructed a lattice path $L$ with the following properties:
\begin{enumerate}
    \item $L$ starts at $(0,4)=(0,k+1-a)$ and remains at or below height $5$.
    
    \item The peaks have weights
    \[
    1,\ 5,\ 8,\ 11,\ 17,\ 25,\ 38,
    \]
    and hence the major index is
    \[
    1+5+8+11+17+25+38=105.
    \]
    This agrees with the exponent of $q$ in the chosen contribution.
    
    \item The relative heights of the peaks are
    \[
    1,\ 1,\ 2,\ 1,\ 3,\ 5,\ 4.
    \]
    
    \item The parity condition is satisfied: for every peak, its weight and relative height have the same parity.
    
    \item The number of $E$-steps between the starting point and each peak of relative height $5$ or $4$, and between any two such peaks, is a multiple of $4$.
\end{enumerate}
\end{exam}

The above example illustrates how the factors on the multisum side are realized by the lattice path operations.  Together with the fixed-peak recurrence argument in Section \ref{sec:fixedpeak}, this gives a combinatorial proof of Theorem \ref{main1}.  We now turn to an independent analytic proof based on Bailey pairs and Bailey's lemma.

\section{The Bailey pair}
In this section, we recall the notion of a Bailey pair 
and several consequences of Bailey's lemma that will be needed 
in the analytic proof of Theorem \ref{main1}. We begin with the 
definition of a Bailey pair and the unit Bailey pair, then state 
Bailey's lemma together with the special cases that will be used 
in the next section.

The notion of a Bailey pair arose from Bailey's proof of the Rogers--Ramanujan identities \cite{bai49}; 
see Andrews \cite{and85} for further details.

\begin{defi}\label{debailey}
 $(\alpha_n(a,q),\beta_n(a,q))$ is called a Bailey pair with parameters $(a,q)$ if
\[\beta_n(a,q)=\sum_{r=0}^n\frac{\alpha_r(a,q)}{(q;q)_{n-r}(aq;q)_{n+r}}\]
for all $n\geq 0$.
\end{defi}

All proofs in the next section start from the following unit Bailey pair.
\[
\beta_n(a,q)=\begin{cases}1,\ if\ n=0,\\0,\  if\ n>0,\end{cases}
\]

\[
\alpha_n(a,q)=(-1)^nq^{n\choose 2}\frac{(a;q)_n}{(q;q)_n}\frac{(1-aq^{2n})}{(1-a)}.
\]

Bailey's lemma, due to Bailey \cite{bai49} and reformulated by Andrews \cite{and84,and85}, produces a new Bailey pair from a given one.

\begin{thm}{\rm(Bailey's Lemma)}
Let  $(\alpha_n(a,q),\beta_n(a,q))$ form a Bailey pair with parameters $(a,q)$. $(\alpha'_n(a,q),\beta_n'(a,q))$ is also a  Bailey pair, where
\[
\alpha_n'(a,q)=
\frac{(\rho_1,\rho_2;q)_n}{(aq/\rho_1,aq/\rho_2;q)_n}
\left(\frac{aq}{\rho_1\rho_2}\right)^n
\alpha_n(a,q).
\]
and
\[
\beta_n'(a,q)=
\sum_{r=0}^n
\frac{(\rho_1,\rho_2;q)_r(aq/\rho_1\rho_2;q)_{n-r}}
{(aq/\rho_1,aq/\rho_2;q)_n(q;q)_{n-r}}
\left(\frac{aq}{\rho_1\rho_2}\right)^r
\beta_r(a,q).
\]
\end{thm}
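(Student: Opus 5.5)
We prove Bailey's lemma in the standard way, by substituting the defining relation of the original pair into $\beta'_n$ and interchanging the order of summation. Concretely, we write
\[
\beta_r(a,q)=\sum_{s=0}^r\frac{\alpha_s(a,q)}{(q;q)_{r-s}(aq;q)_{r+s}},
\]
insert this into the formula for $\beta'_n(a,q)$, and swap the sums over $r$ and $s$. For each fixed $s$ the coefficient of $\alpha_s(a,q)$ is then
\[
\frac{1}{(aq/\rho_1,aq/\rho_2;q)_n}\sum_{r=s}^{n}
\frac{(\rho_1,\rho_2;q)_r\,(aq/\rho_1\rho_2;q)_{n-r}}{(q;q)_{n-r}(q;q)_{r-s}(aq;q)_{r+s}}
\left(\frac{aq}{\rho_1\rho_2}\right)^r .
\]
By Definition~\ref{debailey} applied to the new pair, it suffices to show that this inner sum equals
\[
\frac{(\rho_1,\rho_2;q)_s}{(aq/\rho_1,aq/\rho_2;q)_s}\left(\frac{aq}{\rho_1\rho_2}\right)^s\frac{1}{(q;q)_{n-s}(aq;q)_{n+s}}
\]
for every $0\le s\le n$. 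Granting this, the double sum collapses to $\sum_{s=0}^n \alpha'_s(a,q)/\bigl((q;q)_{n-s}(aq;q)_{n+s}\bigr)$, which is exactly the statement that $(\alpha'_n,\beta'_n)$ is a Bailey pair.

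To evaluate the inner sum, we shift $r=s+m$ with $0\le m\le n-s$ and pull out the $m$-independent factors $(\rho_1,\rho_2;q)_s$, $1/(aq;q)_{2s}$ and $(aq/\rho_1\rho_2)^s$. We then use the standard rewrites
\[
(x;q)_{s+m}=(x;q)_s(xq^s;q)_m,\qquad
\frac{(b;q)_{N-m}}{(q;q)_{N-m}}=\frac{(b;q)_N}{(q;q)_N}\,\frac{(q^{-N};q)_m}{(q^{1-N}/b;q)_m}\Bigl(\frac{q}{b}\Bigr)^m ,
\]
with $N=n-s$ and $b=aq/\rho_1\rho_2$. After these rewrites the remaining sum over $m$ is a terminating balanced ${}_3\phi_2$:
\[
{}_3\phi_2\!\left(\begin{matrix}q^{-N},\ \rho_1q^s,\ \rho_2q^s\\ aq^{2s+1},\ \rho_1\rho_2q^{-N+s}/a\end{matrix};q,q\right).
\]
The $q$-Pfaff--Saalsch\"utz summation then evaluates it as
\[
\frac{(aq^{s+1}/\rho_1,\ aq^{s+1}/\rho_2;q)_N}{(aq^{2s+1},\ aq/\rho_1\rho_2;q)_N}.
\]

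The remaining step is bookkeeping. We combine $(aq/\rho_i;q)_s(aq^{s+1}/\rho_i;q)_{n-s}=(aq/\rho_i;q)_n$, which cancels the prefactor $1/(aq/\rho_1,aq/\rho_2;q)_n$ down to $1/(aq/\rho_1,aq/\rho_2;q)_s$. Similarly $(aq;q)_{2s}(aq^{2s+1};q)_{n-s}=(aq;q)_{n+s}$, and the factor $(aq/\rho_1\rho_2;q)_N$ cancels. The $q$-powers from the reversal identity should combine with the leftover $(aq/\rho_1\rho_2)^m$ to give $1$. The main obstacle is exactly this step: checking that the ${}_3\phi_2$ is genuinely balanced, i.e.\ that the product of the lower parameters is $q$ times the product of the upper ones. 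That condition is what lets Saalsch\"utz apply, and it is also where a sign or power slip in the reversal identity would show up. We will verify it directly before trusting the final cancellation.
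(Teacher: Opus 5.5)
The paper does not prove Bailey's lemma; it quotes it from Bailey and Andrews. Your argument is the classical proof found in those sources: substitute the defining relation, interchange the sums, reduce the coefficient of $\alpha_s$ to a terminating balanced ${}_3\phi_2$, and apply $q$-Pfaff--Saalsch\"utz. It is correct, with one slip.

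The slip is at the spot you flagged. With $N=n-s$ and $b=aq/\rho_1\rho_2$, your reversal identity produces the lower parameter $q^{1-N}/b=\rho_1\rho_2q^{-N}/a$, not $\rho_1\rho_2q^{-N+s}/a$. As you wrote it, the series is not balanced when $s>0$ (the two sides differ by $q^s$). With the corrected parameter it is balanced, since $aq^{2s+1}\cdot\rho_1\rho_2q^{-N}/a=q\cdot q^{-N}\cdot\rho_1q^s\cdot\rho_2q^s$. Your Saalsch\"utz evaluation $(aq^{s+1}/\rho_1,aq^{s+1}/\rho_2;q)_N/(aq^{2s+1},aq/\rho_1\rho_2;q)_N$ and the final bookkeeping are exactly right for this corrected parameter.

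Two smaller points:
\begin{itemize}
\item The $q$-powers do not combine to $1$; they give $(q/b)^m b^m=q^m$. This is precisely the argument $z=q$ of the ${}_3\phi_2$ you wrote, so nothing else changes.
\item The reversal identity divides by $(q^{1-N}/b;q)_m$, and $(b;q)_N$ can vanish for special parameter values. Treat $\rho_1,\rho_2$ as indeterminates, prove the identity of rational functions, and then specialize.
\end{itemize}
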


A useful application of Bailey's lemma is the following result due to Paule \cite{pau85,pau87}. By letting $\rho_1,\rho_2 \to \infty$ in Bailey's lemma, we obtain the following limiting case, which is used in the proof of the Andrews--Gordon identity.

\begin{thm}\label{S1}
Let $(\alpha_n(a,q),\beta_n(a,q))$ be a Bailey pair with parameters $(a,q)$.
If \[\beta'_n(a,q)=\sum_{k=0}^n\frac{a^kq^{k^2}}{(q;q)_{n-k}}\beta_k(a,q),\]
then $(\alpha'_n(a,q),\beta'_n(a,q))$ is also a Bailey pair, where
\[\alpha'_r(a,q)=a^r q^{r^2}\alpha_r(a,q).\]
\end{thm}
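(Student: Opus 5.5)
The statement is the limiting case $\rho_1,\rho_2\to\infty$ of Bailey's lemma. The plan is to take Bailey's lemma as given, write out $\alpha'_n$ and $\beta'_n$ explicitly, and pass to the limit term by term.

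For $\alpha'_n$, I will use that for fixed $n$ and $\rho\to\infty$,
\[
\frac{(\rho;q)_n}{(aq/\rho;q)_n}\,\rho^{-n}\longrightarrow (-1)^n q^{\binom n2}.
\]
This follows because $(\rho;q)_n=\prod_{j=0}^{n-1}(1-\rho q^j)$ behaves like $(-\rho)^n q^{\binom n2}$, while $(aq/\rho;q)_n\to1$. The product of the two such factors therefore tends to $q^{2\binom n2}=q^{n^2-n}$. Multiplying by the remaining factor $(aq)^n$ from $(aq/\rho_1\rho_2)^n$ gives
\[
\alpha'_n\to a^nq^{n^2}\alpha_n .
\]

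For $\beta'_n$, the same estimate applied with index $r$ sends $(\rho_1,\rho_2;q)_r(aq/\rho_1\rho_2)^r$ to $a^rq^{r^2}$. The remaining factors behave as follows:
\[
(aq/\rho_1\rho_2;q)_{n-r}\to1,\qquad (aq/\rho_1,aq/\rho_2;q)_n\to1 .
\]
Hence each summand of $\beta'_n$ tends to $a^rq^{r^2}\beta_r/(q;q)_{n-r}$, which is exactly the formula in the statement.

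To justify that the limit pair is still a Bailey pair, I will use that for every finite $\rho_1,\rho_2$ the identity
\[
\beta'_n=\sum_{r=0}^n\frac{\alpha'_r}{(q)_{n-r}(aq)_{n+r}}
\]
holds and involves only finitely many terms, each a rational function of $\rho_1^{-1},\rho_2^{-1}$ that is regular at $0$. I will set $\rho_i=1/t_i$ and take $t_i\to0$; the limit is then legitimate as an identity of rational functions (or formal power series in $q$ and $a$). No analytic convergence issue arises, since $n$ is fixed and all sums are finite.

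The only delicate point is the sign and power bookkeeping in the first step, namely checking that $(-1)^n$ appears squared and cancels. The rest is routine, so I expect no real obstacle.
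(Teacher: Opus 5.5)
Your proposal is correct and follows the same route as the paper, which obtains this statement simply by letting $\rho_1,\rho_2\to\infty$ in Bailey's lemma. Your computation of the limit ($(\rho;q)_n\rho^{-n}\to(-1)^nq^{\binom n2}$, so $\alpha'_n\to a^nq^{n^2}\alpha_n$ and each summand of $\beta'_n$ tends to $a^rq^{r^2}\beta_r/(q;q)_{n-r}$) and your justification, via regularity at $t_i=1/\rho_i=0$ of the finite defining identity, supply details that the paper leaves implicit.
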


The following three theorems are obtained by taking $\rho_1 \rightarrow \infty$, $\rho_2=-\sqrt{aq}$, $\rho_1\rightarrow \infty$,  $\rho_2=-a^{1/2}q$  and $\rho_1\rightarrow \infty$, $\rho_2=-a^{1/2}$
in Bailey's lemma respectively.

\begin{thm}\label{S2}
Suppose that $(\alpha_n(a,q),\beta_n(a,q))$ is a Bailey pair with parameters $(a,q)$.
If \[\beta'_n(a,q)=\sum_{k=0}^n\frac{(-\sqrt{aq};q)_k}
{(q;q)_{n-k}(-\sqrt{aq};q)_n}a^{k/2}q^{k^2/2}\beta_k(a,q),\]
then $(\alpha'_n(a,q),\beta'_n(a,q))$ is a Bailey pair with parameters $(a,q)$, where
\[\alpha'_r(a,q)=a^{r/2}q^{r^2/2}\alpha_r(a,q).\]
\end{thm}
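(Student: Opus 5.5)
The plan is to derive Theorem~\ref{S2} from Bailey's lemma by specialising $\rho_2=-\sqrt{aq}$ and letting $\rho_1\to\infty$. I will compute the limiting form of the $\alpha'$ factor, then of the $\beta'$ kernel, check that both agree with the stated formulas, and finally justify that the limit may be taken term by term.

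\emph{The $\alpha'$ side.} In $\alpha_n'$, the factor involving $\rho_1$ is
\[
\frac{(\rho_1;q)_n}{(aq/\rho_1;q)_n}\,\rho_1^{-n}.
\]
Since $(\rho_1;q)_n=\prod_{j=0}^{n-1}(1-\rho_1q^j)$, we have $(\rho_1;q)_n\rho_1^{-n}\to(-1)^nq^{\binom n2}$ as $\rho_1\to\infty$, while $(aq/\rho_1;q)_n\to1$. With $\rho_2=-\sqrt{aq}$, we have $aq/\rho_2=-\sqrt{aq}$, so
\[
\frac{(\rho_2;q)_n}{(aq/\rho_2;q)_n}=1 .
\]
The remaining power factor gives $(aq)^n(-\sqrt{aq})^{-n}=(-1)^n(aq)^{n/2}$. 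Collecting everything,
\[
(-1)^n q^{\binom n2}\,(-1)^n (aq)^{n/2}=a^{n/2}q^{n^2/2},
\]
since $\binom n2+n/2=n^2/2$. This is exactly the claimed $\alpha'_r$.

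\emph{The $\beta'$ side.} The terms $(\rho_1;q)_r\rho_1^{-r}$ combine in the same way to give $(-1)^rq^{\binom r2}$. The factor $(\rho_2;q)_r=(-\sqrt{aq};q)_r$ stays in the numerator. The denominator $(aq/\rho_1,aq/\rho_2;q)_n$ tends to $(-\sqrt{aq};q)_n$. The factor $(aq/\rho_1\rho_2;q)_{n-r}$ tends to $1$. The remaining power is $(aq)^r(-\sqrt{aq})^{-r}=(-1)^r(aq)^{r/2}$. After the signs cancel, the kernel becomes
\[
\frac{(-\sqrt{aq};q)_r}{(q;q)_{n-r}(-\sqrt{aq};q)_n}\,a^{r/2}q^{r^2/2},
\]
which matches the stated $\beta'_n$.

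\emph{Passing to the limit.} Bailey's lemma holds for every value of $\rho_1$, so we may let $\rho_1\to\infty$ in the defining relation of the pair $(\alpha',\beta')$. For each fixed $n$ this relation is a finite sum, so the limit can be taken term by term without convergence issues. The limits of $\alpha'$ and $\beta'$ therefore still satisfy the Bailey pair relation of Definition~\ref{debailey}.

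The only step needing care is the sign and power bookkeeping in the $\rho_1\to\infty$ limits, in particular checking that $\binom r2+r/2=r^2/2$. I do not expect any real obstacle beyond this.
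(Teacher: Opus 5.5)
Your proposal is correct and follows the paper's own route: the paper obtains Theorem~\ref{S2} by setting $\rho_2=-\sqrt{aq}$ and letting $\rho_1\to\infty$ in Bailey's lemma. Your limit bookkeeping, including $(\rho_2;q)_n/(aq/\rho_2;q)_n=1$ and $\binom{r}{2}+r/2=r^2/2$, supplies the details the paper leaves implicit. The termwise limit is legitimate because, for fixed $n$, the Bailey relation is a finite identity of rational functions in $1/\rho_1$ that are regular at $1/\rho_1=0$.
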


The following theorem, due to Bressoud, Ismail and Stanton \cite{bre00}, changes the base from $q$ to $q^2$ in Bailey's lemma.

\begin{thm}\label{D1}
Let $(\alpha_n(a,q),\beta_n(a,q))$ be a Bailey pair with parameters $(a,q)$.
Then $(\alpha_n'(a,q),\beta_n'(a,q))$ is also a Bailey pair, where
\[\alpha_n'(a,q)=\alpha_n(a^2,q^2),\]
and
\[\beta_n'(a,q)=\sum_{r=0}^n\frac{(-aq;q)_{2r}}{(q^2;q^2)_{n-r}}
q^{n-r}\beta_r(a^2,q^2).\]
\end{thm}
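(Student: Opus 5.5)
We prove Theorem \ref{D1} directly from Definition \ref{debailey}: substitute the defining relation of the original pair (with parameters $(a^2,q^2)$) into the proposed $\beta_n'(a,q)$, interchange summations, and reduce the inner sum to a closed form by a summation formula.

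Since $(\alpha_r(a^2,q^2),\beta_r(a^2,q^2))$ is a Bailey pair with parameters $(a^2,q^2)$, we have
\[
\beta_r(a^2,q^2)=\sum_{j=0}^r\frac{\alpha_j(a^2,q^2)}{(q^2;q^2)_{r-j}(a^2q^2;q^2)_{r+j}}.
\]
Inserting this into the definition of $\beta_n'(a,q)$ and interchanging the order of summation gives
\[
\beta_n'(a,q)=\sum_{j=0}^n \alpha_j(a^2,q^2)\sum_{r=j}^n\frac{(-aq;q)_{2r}\,q^{n-r}}{(q^2;q^2)_{n-r}(q^2;q^2)_{r-j}(a^2q^2;q^2)_{r+j}}.
\]
Because $\alpha_n'(a,q)=\alpha_n(a^2,q^2)$, it therefore suffices to prove the inner-sum identity
\[
\sum_{r=j}^n\frac{(-aq;q)_{2r}\,q^{n-r}}{(q^2;q^2)_{n-r}(q^2;q^2)_{r-j}(a^2q^2;q^2)_{r+j}}=\frac{1}{(q;q)_{n-j}(aq;q)_{n+j}}
\]
for all $0\le j\le n$.

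To simplify this, write $(a^2q^2;q^2)_{m}=(aq;q)_m(-aq;q)_m$. Put $r=j+s$ and $m=n-j$. The factor $(-aq;q)_{2r}/(-aq;q)_{r+j}$ becomes $(-aq^{2j+1};q)_{s}$, and the claim reduces to
\[
\sum_{s=0}^{m}\frac{(-aq^{2j+1};q)_s\,q^{m-s}}{(q^2;q^2)_{m-s}(q^2;q^2)_s(aq;q)_{2j+s}}=\frac{1}{(q;q)_m(aq;q)_{2j+m}}.
\]
Setting $b=aq^{2j}$ and multiplying through by $(bq;q)_m\,(q;q)_m$ (note $(aq;q)_{2j+s}=(aq;q)_{2j}(bq;q)_s$) turns this into the terminating identity
\[
\sum_{s=0}^m \frac{(q;q)_m}{(q^2;q^2)_{m-s}(q^2;q^2)_s}\,(-bq;q)_s\,\frac{(bq;q)_m}{(bq;q)_s}\,q^{m-s}=1.
\]

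The main obstacle is verifying this last identity. We plan to treat it in two stages. First, write $(bq;q)_m/(bq;q)_s=(bq^{s+1};q)_{m-s}$ and observe that $(-bq;q)_s(bq^{s+1};q)_{m-s}$ mixes the signs of $b$. Second, rewrite the ratio of $q^2$-factorials via $(q^2;q^2)_t=(q;q)_t(-q;q)_t$, which converts the sum into a ${}_3\phi_2$ or ${}_4\phi_3$ series in base $q$. We then expect it to be summable by the $q$-Pfaff--Saalschütz formula, or, after the quadratic change of base, by Andrews' $q$-analogue of Bailey's ${}_2F_1(1/2)$ summation, which is the standard source for mixed $q$/$q^2$ denominators of exactly this shape.

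As a fallback, we can proceed by induction on $m$, viewing both sides as polynomials in $b$ and checking a first-order recurrence in $m$ through telescoping. For $m=0$ the identity reads $1=1$. For $m=1$ the left side is $q(1-bq)/(1-q^2)\cdot(1-q)+(1+bq)(1-q)/(1-q^2)=\bigl(q-bq^2+1+bq\bigr)(1-q)/(1-q^2)$, and one should confirm the $b$-terms cancel; if they do not, the normalization of the claimed identity must be rechecked. Once the identity is verified, we obtain $\beta_n'=\sum_j \alpha_j'/\bigl((q;q)_{n-j}(aq;q)_{n+j}\bigr)$, which proves the theorem.
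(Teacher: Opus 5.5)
Your proposal has a genuine gap: an algebra slip turns the kernel identity into a false one, and the one nontrivial summation is never proved. The paper does not prove this result itself; it quotes it from Bressoud--Ismail--Stanton. Your reduction is the natural route: insert the $(a^2,q^2)$ Bailey relation, interchange sums, and prove the kernel identity for each $j$. The kernel identity you write down is exactly what is needed.

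The next simplification is wrong. With $r=j+s$,
\[
\frac{(-aq;q)_{2r}}{(-aq;q)_{r+j}}=\frac{(-aq;q)_{2j+2s}}{(-aq;q)_{2j+s}}=(-aq^{2j+s+1};q)_s,
\]
not $(-aq^{2j+1};q)_s$. So the terminating identity you set out to prove is false. Your own $m=1$ check shows this: your left side equals $1+bq(1-q)/(1+q)$. This is not a normalization issue. With the correct factor $(-bq^{s+1};q)_s$, the $s=1$ term at $m=1$ carries $1+bq^2$ instead of $1+bq$, and the sum is exactly $1$. Put cleanly, set $b=aq^{2j}$. Then $(-aq;q)_{2j+2s}/(a^2q^2;q^2)_{2j+s}=(-bq;q)_{2s}/\bigl((aq;q)_{2j}(b^2q^2;q^2)_s\bigr)$, so the kernel identity is equivalent to
\[
\sum_{s=0}^{m}\frac{(-bq;q)_{2s}\,q^{m-s}}{(q^2;q^2)_{m-s}(q^2;q^2)_s(b^2q^2;q^2)_s}=\frac{1}{(q;q)_m(bq;q)_m}.
\]

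Even after this correction, the summation is not established. You only expect it to follow from $q$-Pfaff--Saalsch\"utz or Andrews' $q$-Bailey sum, and you do not carry out the induction. The proof therefore stops at its only nontrivial step.

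One way to close it uses $\sum_{t\ge0}q^tz^t/(q^2;q^2)_t=1/(zq;q^2)_\infty$. By this, the displayed identity for all $m$ is equivalent to
\[
\sum_{s\ge0}\frac{(-bq;q)_{2s}z^s}{(q^2;q^2)_s(b^2q^2;q^2)_s}=(zq;q^2)_\infty\sum_{m\ge0}\frac{z^m}{(q;q)_m(bq;q)_m}.
\]
Expand $(zq;q^2)_\infty=\sum_k(-1)^kq^{k^2}z^k/(q^2;q^2)_k$. The coefficient of $z^n$ on the right is then
\[
\sum_{k=0}^{n}\frac{(-1)^kq^{k^2}}{(q^2;q^2)_k(q;q)_{n-k}(bq;q)_{n-k}}
=\frac{1}{(q;q)_n(bq;q)_n}\sum_{k=0}^{n}\frac{(q^{-n};q)_k(b^{-1}q^{-n};q)_k}{(q;q)_k(-q;q)_k}\bigl(-bq^{2n+1}\bigr)^k .
\]
Take $B=b^{-1}q^{-n}$ and $C=-q$, so that $-bq^{2n+1}=Cq^n/B$. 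The last sum is then the terminating $q$-Chu--Vandermonde sum, equal to $(-bq^{n+1};q)_n/(-q;q)_n$. Hence the coefficient is $(-bq^{n+1};q)_n/\bigl((q^2;q^2)_n(bq;q)_n\bigr)=(-bq;q)_{2n}/\bigl((q^2;q^2)_n(b^2q^2;q^2)_n\bigr)$, which is the coefficient of $z^n$ on the left.
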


We also need the following proposition, due to Bressoud, Ismail and 
Stanton \cite{bre00}, which allows us to introduce linear terms in the summation indices on the multisum side of our identities.

\begin{pro}{\rm (\cite[Proposition 4.1]{bre00})} \label{pro4.1}
Let $(\alpha_n(q),\beta_n(q))$ be a Bailey pair and
\[\alpha_n(q)=\begin{cases}1,&\mbox{for}\ n=0,\\(-1)^nq^{An^2}(q^{(A-1)n}+q^{-(A-1)n}),&\mbox{for}\ n>0,\end{cases}\]
then $(\alpha'_n(q),\beta'_n(q))$ is also a Bailey pair,
where $\beta'_n(q)=q^n\beta_n(q)$, and
\[\alpha'_n(q)=\begin{cases}1,&\mbox{for}\ n=0,\\(-1)^nq^{An^2}(q^{An}+q^{-An}),&\mbox{for}\ n>0.\end{cases}\]
\end{pro}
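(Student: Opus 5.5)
The plan is to prove the statement directly from Definition \ref{debailey}, with parameter $a=1$ (this is the setting of \cite{bre00}, since the denominators must be $(q;q)_{n-r}(q;q)_{n+r}$ for the symmetric form of $\alpha_n$ to make sense). We must show that for every $n\ge0$
\[
q^n\beta_n(q)=\sum_{r=0}^{n}\frac{\alpha'_r(q)}{(q;q)_{n-r}(q;q)_{n+r}} .
\]

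\emph{Step 1: symmetrize both sides.} The two-term form of $\alpha_r$ for $r>0$ lets us spread the sum over $-n\le r\le n$. Pair $(-1)^rq^{Ar^2+(A-1)r}$ at $r$ with the same expression at $-r$, which equals $(-1)^rq^{Ar^2-(A-1)r}$. The term $r=0$ gives $1=\alpha_0$, and the denominator $(q;q)_{n-r}(q;q)_{n+r}$ is invariant under $r\mapsto -r$. Hence
\[
\beta_n=\sum_{r=-n}^{n}\frac{(-1)^rq^{Ar^2+(A-1)r}}{(q;q)_{n-r}(q;q)_{n+r}} .
\]
In exactly the same way, the right-hand side of the claim becomes $\sum_{r=-n}^{n}(-1)^rq^{Ar^2+Ar}/\bigl((q;q)_{n-r}(q;q)_{n+r}\bigr)$.

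\emph{Step 2: form the difference.} The difference between the symmetrized right-hand side and $q^n\beta_n$ is
\[
\sum_{r=-n}^{n}\frac{(-1)^rq^{Ar^2+(A-1)r}\,(q^r-q^n)}{(q;q)_{n-r}(q;q)_{n+r}}
=\sum_{r=-n}^{n-1}\frac{(-1)^rq^{Ar^2+Ar}}{(q;q)_{n-r-1}(q;q)_{n+r}}.
\]
Here we used $q^r-q^n=q^r(1-q^{n-r})$ and cancelled the factor $1-q^{n-r}$ against $(q;q)_{n-r}$. The term $r=n$ vanishes because of that factor.

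\emph{Step 3: cancel by an involution.} Apply $r\mapsto -r-1$, which maps the range $-n\le r\le n-1$ onto itself. Under this map:
\begin{itemize}
\item the exponent is invariant, since $A(r+1)^2-A(r+1)=Ar^2+Ar$;
\item the two denominator factors $(q;q)_{n-r-1}$ and $(q;q)_{n+r}$ are exchanged;
\item the sign $(-1)^r$ flips.
\end{itemize}
So the summand is antisymmetric and the sum is zero. The map has no fixed point, since $r=-r-1$ has no integer solution, so the terms genuinely cancel in pairs. This is the same pairing trick used in the proof of Proposition \ref{pro:U-recurrence}.

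The only step needing care is Step 1: checking that the $n=0$ convention $\alpha_0=\alpha'_0=1$ is consistent with the symmetric extension, and that the claimed $\alpha'$ has the same two-term shape with $A-1$ replaced by $A$. Once that bookkeeping is done, Steps 2 and 3 are routine.
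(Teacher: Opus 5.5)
Your proof is correct and complete. Step 1 is legitimate because the pair has parameter $a=1$, which is also how the paper applies the proposition, starting from the unit pair at $a=1$. With $a=1$ the denominator $(q;q)_{n-r}(q;q)_{n+r}$ is symmetric in $r$. Step 2 correctly uses $q^r-q^n=q^r(1-q^{n-r})$ to remove the $r=n$ term. The fixed-point-free involution $r\mapsto -r-1$ on $\{-n,\dots,n-1\}$ then cancels everything else, and $n=0$ is covered trivially because that range is empty. Only the identity $A(r+1)^2-A(r+1)=Ar^2+Ar$ is used, so no restriction on $A$ enters.

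The paper does not prove this proposition; it simply quotes it from \cite{bre00}, so your argument supplies a self-contained verification. The paper does, however, already carry out the same computation elsewhere. Replace $q$ by $q^2$, take $A=k/2$ and $n=N$. Your difference
\[
\sum_{r=-n}^{n}\frac{(-1)^rq^{Ar^2+(A-1)r}(q^r-q^n)}{(q;q)_{n-r}(q;q)_{n+r}}
\]
then becomes, up to the prefactor $q^{N^2}(-q;q^2)_N$, exactly the sum shown to vanish for $\mathcal U_{k,1}(N)-q^{2N}\mathcal U_{k,3}(N)$ in the proof of Proposition \ref{pro:U-recurrence}. It vanishes there by the same involution. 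So the relation $\mathcal U_{k,1}(N)=q^{2N}\mathcal U_{k,3}(N)$ is a base-$q^2$ instance of this proposition. Citing \cite{bre00} buys the paper brevity, while your route makes that connection explicit.
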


\section{The proof of Theorem \ref{main1}}
In this section, we prove Theorem \ref{main1} by using Bailey's lemma.

\noindent {\bf Proof of Theorem \ref{main1}.}
Let us begin with the unit Bailey pair by putting $a=1$ to get
 a Bailey pair $(\alpha_n^{(0)}(q),\beta_n^{(0)}(q))$, where
\[
\alpha_0^{(0)}(q)=1,\qquad
\alpha_n^{(0)}(q)=(-1)^nq^{n^2/2}(q^{-n/2}+q^{n/2}) \quad (n\ge1).
\]
\[\beta_n^{(0)}(q)=\begin{cases}1,\ if\ n=0, \\0,\  if\ n>0.\end{cases}\]

Applying Theorem \ref{D1} on $(\alpha_n^{(0)}(q),\beta_n^{(0)}(q))$, we get a new Bailey pair $(\alpha_n^{(1)}(q),\beta_n^{(1)}(q))$, where
\[\alpha_n^{(1)}(q)
=(-1)^nq^{n^2}(q^{-n}+q^{n}),\]
\[\beta_n^{(1)}(q)=\frac{q^n}{(q^2;q^2)_n}.\]

Applying Theorem \ref{S2} two times and  Proposition \ref{pro4.1} alternatively for $(k-a-1)/2$ times on $(\alpha_n^{(1)}(q),\beta_n^{(1)}(q))$ gives the following Bailey pair $(\alpha_n^{(k-a)}(q),\beta_n^{(k-a)}(q))$, where
\[
\alpha_n^{(k-a)}(q)
=(-1)^nq^{(k-a+1)n^2/2}(q^{-(k-a+1)n/2}+q^{(k-a+1)n/2}),
\]
\[
\beta_n^{(k-a)}(q)
=\sum_{n\geq N_1\geq \cdots \geq N_{k-a-1} \geq 0}
\frac{q^{n+N_1^2/2+\cdots+N_{k-a-1}^2/2+N_2+N_4+\cdots+N_{k-a-1}}(-\sqrt{q};q)_{N_{k-a-1}}}
{(q)_{n-N_1}\cdots(q)_{N_{k-a-2}-N_{k-a-1}}(q^2;q^2)_{N_{k-a-1}}(-\sqrt{q};q)_{n}}.
\]

We next apply Theorem \ref{S2} for $a$ times on this Bailey pair to obtain  $(\alpha_n^{(k)}(q),\beta_n^{(k)}(q))$.

\[
\alpha_n^{(k)}(q)
=(-1)^nq^{(k+1)n^2/2}(q^{-(k-a+1)n/2}+q^{(k-a+1)n/2}),
\]
\[
\beta_n^{(k)}(q)
=
\sum_{n\geq N_1\geq \cdots \geq N_{k-1}\geq 0}
\frac{
q^{N_1^2/2+\cdots+N_{k-1}^2/2+N_a+N_{a+2}+\cdots+N_{k-1}}
(-\sqrt q;q)_{N_{k-1}}
}
{
(q)_{n-N_1}\cdots(q)_{N_{k-2}-N_{k-1}}
(q^2;q^2)_{N_{k-1}}
(-\sqrt q;q)_n
}.
\]

It is easy to get  the following identity according to Definition \ref{debailey}.
\begin{align*}
\sum_{n\geq N_1\geq \cdots \geq N_{k-1}\geq 0}
&\frac{
q^{N_1^2/2+\cdots+N_{k-1}^2/2+N_a+N_{a+2}+\cdots+N_{k-1}}
(-\sqrt q;q)_{N_{k-1}}
}
{
(q)_{n-N_1}\cdots(q)_{N_{k-2}-N_{k-1}}
(q^2;q^2)_{N_{k-1}}
(-\sqrt q;q)_n
}  \\
&=
\sum_{r=0}^{n}
\frac{
(-1)^r q^{(k+1)r^2/2}
\left(q^{-(k-a+1)r/2}+q^{(k-a+1)r/2}\right)
}
{(q;q)_{n-r}(q;q)_{n+r}}.
\end{align*}

Letting $n\rightarrow \infty$, we see that
\begin{align}\label{eq1}
\sum_{ N_1\geq \cdots \geq N_{k-1} \geq 0}&
\frac{q^{N_1^2/2+\cdots+N_{k-1}^2/2+N_a+N_{a+2}+\cdots+N_{k-1}}(-\sqrt{q};q)_{N_{k-1}}}
{(q)_{N_1-N_2}\cdots(q)_{N_{k-2}-N_{k-1}}(q^2;q^2)_{N_{k-1}}}\nonumber\\
=&\frac{(-\sqrt{q};q)_{\infty}}{(q;q)_{\infty}}
\sum_{r=0}^{\infty}(-1)^rq^{(k+1)r^2/2}(q^{-(k-a+1)r/2}+q^{(k-a+1)r/2}).
\end{align}

By substituting $q^2$ for $q$ in \eqref{eq1},  Theorem \ref{main1} follows from Jacobi's triple product identity immediately.   \qed

\medskip
\begin{rem}\label{rem:bgg-dictionary}
There is a simple product-side coincidence with the Bressoud--G\"ollnitz--Gordon
dictionary of Layne, Marshall, Sadowski and Shambaugh \cite{layne23} in the
subcase where $k$ is even and $a$ is odd.  Write
\[
k=2K-2,\qquad a=2h-1.
\]
Then $i=K-h+1$ gives
\[
2K-2i+1=a,\qquad 2K+2i-3=2k+2-a.
\]
Hence the product side of Theorem~\ref{main1} is exactly the shelf-zero series
$G_{K-h+1}$ in \cite{layne23}.  By \cite[Proposition~7.1]{layne23}, this gives
\[
\frac{(-q;q^2)_\infty
(q^a,q^{2k+2-a},q^{2k+2};q^{2k+2})_\infty}
{(q^2;q^2)_\infty}
=
\widetilde J_{\frac{k+2}{2},\,\frac{a+1}{2}}
\left(\frac1q;1;q^2\right).
\]
This is only a product-side identification.  The other opposite-parity subcase,
where $k$ is odd and $a$ is even, would force $K=(k+2)/2$ to be a half-integer
and therefore does not fall within the integer-parameter Bressoud--G\"ollnitz--Gordon
shelf system of \cite{layne23}.  It would be interesting to know whether this
missing half admits an analogous dictionary, perhaps after modifying the BGG-type
shelf system or passing to a different family of ghost series.
\end{rem}

\begin{rem}
Although Andrews' identity already gives the product form in the case
$k\equiv a \pmod{2}$, it would be natural to ask whether there exists a
companion multisum identity in this parity case whose left-hand side admits
the same lattice path interpretation as in the present paper. Finding such
an identity would provide a more unified picture for the two parity cases,
and may also shed further light on the relation with ghost series
\cite{layne23} and possible $n$-color or split $(n+t)$-color partition
models in the spirit of modified lattice path interpretations
\cite{sacag23}.
\end{rem}

For completeness, we also give a direct proof of \eqref{eq:finite-transform}
from the $q$-Gauss summation.

\medskip
\noindent{\it A direct proof of \eqref{eq:finite-transform}.}
The summand is invariant under $r\mapsto -r$, so it is enough to prove the result
for $r\geq0$. Put $N=r+m$. The left-hand side of \eqref{eq:finite-transform},
divided by $q^{r^2}$, becomes
\[
\frac{(-q;q^2)_r}{(q^2;q^2)_{2r}}
\sum_{m\geq0}
\frac{(-q^{2r+1};q^2)_m q^{m^2+2rm}}
{(q^2;q^2)_m(q^{4r+2};q^2)_m}.
\]
We use the $q$-Gauss summation with base $q^2$,
\[
\sum_{m\geq0}
\frac{(A,B;q^2)_m}{(q^2,C;q^2)_m}
\left(\frac{C}{AB}\right)^m
=
\frac{(C/A,C/B;q^2)_\infty}{(C,C/(AB);q^2)_\infty}.
\]
Taking
\[
A=-q^{2r+1},\qquad C=q^{4r+2},
\]
and then letting $B\to\infty$, we obtain
\[
(B;q^2)_m\left(-\frac{q^{2r+1}}{B}\right)^m
\longrightarrow q^{m^2+2rm}.
\]
Hence
\[
\sum_{m\geq0}
\frac{(-q^{2r+1};q^2)_m q^{m^2+2rm}}
{(q^2;q^2)_m(q^{4r+2};q^2)_m}
=
\frac{(-q^{2r+1};q^2)_\infty}{(q^{4r+2};q^2)_\infty}.
\]
Therefore
\[
\frac{(-q;q^2)_r}{(q^2;q^2)_{2r}}
\frac{(-q^{2r+1};q^2)_\infty}{(q^{4r+2};q^2)_\infty}
=
\frac{(-q;q^2)_\infty}{(q^2;q^2)_\infty}.
\]
Multiplying back by $q^{r^2}$ proves \eqref{eq:finite-transform}. \qed

\vspace{0.5cm}
\noindent{\bf Acknowledgments.}
The authors would like to thank S. Ole Warnaar for valuable correspondence
concerning the historical development of lattice-path and particle-motion
methods for Andrews--Gordon type identities.  In particular, we are grateful
to him for drawing our attention to his work \cite{war97} and to the recent
work of Dousse and Jang \cite{doussejang26}.  His comments helped us improve
the historical discussion and clarify the relation between the present paper
and earlier lattice-path and particle-motion approaches.

This work was supported by the National Natural Science Foundation of China
(No.~12101307) and the Qing Lan Project of Jiangsu Province.


\begin{thebibliography}{99}\small

\bibitem{and74}
G. E. Andrews,
An analytic generalization of the Rogers--Ramanujan identities for odd moduli,
\emph{Proc. Nat. Acad. Sci. USA} \textbf{71} (1974), 4082--4085.

\bibitem{and84}
G. E. Andrews,
Multiple series Rogers--Ramanujan type identities,
\emph{Pacific J. Math.} \textbf{114} (1984), 267--283.

\bibitem{and85}
G. E. Andrews,
\emph{$q$-Series: Their Development and Application in Analysis, Number Theory, Combinatorics, Physics, and Computer Algebra},
CBMS Regional Conference Series in Mathematics, vol.~66,
American Mathematical Society, Providence, RI, 1985.

\bibitem{and10}
G. E. Andrews,
Parity in partition identities,
\emph{Ramanujan J.} \textbf{23} (2010), 45--90.

\bibitem{andbre85}
G. E. Andrews and D. M. Bressoud,
On the Burge correspondence between partitions and binary words,
\emph{Rocky Mountain J. Math.} \textbf{15} (1985), 225--233.

\bibitem{bai49}
W. N. Bailey,
Identities of the Rogers--Ramanujan type,
\emph{Proc. London Math. Soc.} \textbf{50} (1949), 1--10.

\bibitem{berpau01}
A. Berkovich and P. Paule,
Lattice paths, $q$-multinomials and two variants of the Andrews--Gordon identities,
\emph{Ramanujan J.} \textbf{5} (2001), 409--425.

\bibitem{bre89}
D. M. Bressoud,
Lattice paths and the Rogers--Ramanujan identities,
in \emph{Number Theory, Madras 1987},
Lecture Notes in Mathematics, vol.~1395,
Springer, Berlin, 1989, 140--172.

\bibitem{bre00}
D. M. Bressoud, M. Ismail, and D. Stanton,
Change of base in Bailey pairs,
\emph{Ramanujan J.} \textbf{4} (2000), 435--453.

\bibitem{bur82}
W. H. Burge,
A three-way correspondence between partitions,
\emph{European J. Combin.} \textbf{3} (1982), 195--213.

\bibitem{doussejang26}
J. Dousse and J. Jang,
Andrews--Gordon type identities with parity restrictions through particle motion,
preprint, arXiv:2603.05300, 2026.

\bibitem{gr04}
G. Gasper and M. Rahman,
\emph{Basic Hypergeometric Series}, second ed.,
Encyclopedia of Mathematics and its Applications, vol.~96,
Cambridge University Press, Cambridge, 2004.

\bibitem{gor61}
B. Gordon,
A combinatorial generalization of the Rogers--Ramanujan identities,
\emph{Amer. J. Math.} \textbf{83} (1961), 393--399.

\bibitem{kim13}
S. Kim and A. J. Yee,
The Rogers--Ramanujan--Gordon identities, the generalized G{\"o}llnitz--Gordon identities, and parity questions,
\emph{J. Combin. Theory Ser. A} \textbf{120} (2013), 1038--1056.

\bibitem{kur09}
K. Kur\c{s}ung\"{o}z,
Parity considerations in Andrews--Gordon identities,
\emph{European J. Combin.} \textbf{31} (2010), 976--1000.

\bibitem{layne23}
J. Layne, S. Marshall, C. Sadowski, and E. Shambaugh,
Ghost series and a motivated proof of the Bressoud--G{\"o}llnitz--Gordon identities,
\emph{Ramanujan J.} \textbf{65} (2024), 593--635.

\bibitem{mallet08}
O. Mallet,
\emph{Autour des surpartitions et des identit\'es de type Rogers--Ramanujan},
Ph.D. thesis, Universit\'e Paris Diderot--Paris 7, 2008.
Available at HAL, tel-00366067.

\bibitem{pau85}
P. Paule,
On identities of the Rogers--Ramanujan type,
\emph{J. Math. Anal. Appl.} \textbf{107} (1985), 255--284.

\bibitem{pau87}
P. Paule,
A note on Bailey's lemma,
\emph{J. Combin. Theory Ser. A} \textbf{44} (1987), 164--167.

\bibitem{sacag23}
R. Sachdeva and A. K. Agarwal,
Further Rogers--Ramanujan type identities for modified lattice paths,
\emph{Contrib. Discrete Math.} \textbf{18} (2023), 74--90.

\bibitem{war97}
S. O. Warnaar,
The Andrews--Gordon identities and $q$-multinomial coefficients,
\emph{Commun. Math. Phys.} \textbf{184} (1997), 203--232.


\end{thebibliography}
\end{document}